\documentclass{amsart}
\usepackage{xy, enumerate, pstcol, amssymb, amsfonts, amsbsy, amsthm, amsmath, amscd, epsfig, latexsym, mathtools, stmaryrd, epic, eepic, eucal, multicol, fancyhdr, graphicx}

\DeclareMathAlphabet{\mathpzc}{OT1}{pzc}{m}{it}

\newenvironment{dem}{\begin{proof}[\bf Proof]}{\end{proof}}
\newtheorem{theorem}{\bf Theorem}[section]
\newtheorem*{maintheorem}{\bf Main Theorem}
\newtheorem*{conjrecall}{\bf Conjecture \ref{conj:gens}}
\newtheorem{lemma}[theorem]{\bf Lemma}
\newtheorem{propos}[theorem]{\bf Proposition}
\newtheorem{corol}[theorem]{\bf Corollary}
\newtheorem{claim}[theorem]{\bf Claim}
\newtheorem{conj}[theorem]{\bf Conjecture}

\theoremstyle{definition}
\newtheorem{defi}[theorem]{\bf Definition}
\newtheorem{rmk}[theorem]{\bf Remark}

\newtheorem{notconv}[theorem]{\bf Convention}

\newcommand{\Dcal}{\mathcal D}

\newcommand{\G}{\mathbb G}

\newcommand{\Hcal}{\mathcal H}

\newcommand{\Mcal}{\mathcal M}

\newcommand{\Ocal}{\mathcal O}
\newcommand{\Pro}{\mathbb P}

\newcommand{\Q}{\mathbb Q}
\newcommand{\Qcal}{\mathcal Q}

\newcommand{\Z}{\mathbb Z}

\newcommand{\cart}{\ar @{} [dr] |{\Box}}

\newcommand{\GL}{\text{GL}}
\newcommand{\PGL}{\text{PGL}}

\newcommand{\Sym}{\text{Sym}}

\newcommand{\ov}{\overline}
\newcommand{\im}{\text{Im}}

\newcommand{\wt}{\widetilde}

\newcommand{\Sch}{\text{Sch}}

\input xy
\xyoption{all}

\oddsidemargin 0pt
\evensidemargin 0pt
\topmargin 0pt
\textwidth 500pt
\textheight 630pt

\title{The integral Chow ring of $\Mcal_{0}(\Pro^r, d)$, for $d$ odd}
\author{Renzo Cavalieri}
\author{Damiano Fulghesu}
\date{\today}

\address{Renzo Cavalieri, Department of Mathematics, Colorado State University, 1874 Campus Delivery, Fort Collins, CO, 80523-1874, U.S.A.}
\email{renzo@math.colostate.edu}
\address{Damiano Fulghesu, Department of Mathematics, Minnesota State University, 1104 7th Ave South, Moorhead, MN 56563, U.S.A.}\email{fulghesu@mnstate.edu}

%\subjclass[2010]{14C15 (primary),  	14L30, 14D23 (secondary)}

\begin{document}

\begin{abstract}
For any odd integer $d$, we give a presentation for the integral Chow ring of the stack $\Mcal_{0}(\Pro^r, d)$, as a quotient of the polynomial ring $\Z[c_1,c_2]$. We describe an efficient set of generators for the ideal of relations, and compute them in generating series form.  
The paper concludes with explicit computations of some examples for low values of $d$ and $r$, and a conjecture for a minimal set of generators.
\end{abstract}

\maketitle

%%%%%%%%%%
\section{Introduction}
%%%%%%%%%%
\subsection{Results and strategy of proof}
The main goal of this work is to compute the integral Chow ring of the open set of the space of rational stable maps of odd degree to projective space, consisting of maps from  irreducible source curves. We exhibit such ring as the quotient of a polynomial ring in two variables, and provide an efficient set of generators for the ideal of relations. We state here the complete result, which is proven in several steps throughout the paper.

\begin{maintheorem}[Thm.\ref{thmheresthepres}, Thm. \ref{ideal.of.relations}, Thm. \ref{thm:genfunctionsforfirstenveloperelations}, Thm. \ref{thm:degszero}]
For $r$ a positive integer, and $d$ an odd positive integer,
a presentation for the integral Chow ring of the stack $\Mcal_{0}(\Pro^r, d)$ is given by:
\begin{equation*}
 A^\ast(\Mcal_{0}(\Pro^r, d)) = \frac{\Z[c_1,c_2]}{(\alpha^{r,d}_{i,k})_{i,k}},
 \end{equation*}
 where $c_i$ is a graded variable of degree $i$ and $\alpha_{i,k}^{r,d}$ is homogeneous of degree $ir+k$ (Thm. \ref{thmheresthepres}); in order to generate the ideal of relations it is sufficient to consider the following values for $i,k$ (Thm. \ref{ideal.of.relations}):
 \begin{itemize}
     \item $i = 1$ and $k = 0,1$.
     \item $i$ is a prime power and $k = 0$.
 \end{itemize}
 
 \noindent The generating relations are exhibited in generating function form, where: $$\mathcal{A}_{i,k}(d) := \sum_{r=0}^{\infty} \alpha_{i,k}^{r,d}.$$
 
\noindent Relations of the form $\alpha^{r,d}_{1,k}$ are explicitly computed in Theorem \ref{thm:genfunctionsforfirstenveloperelations}:
 \begin{equation*}
  \mathcal{A}_{1,0}(d)  = \frac{d}{(1+\frac{d-1}{2}c_1)(1-\frac{d+1}{2}c_1)+d^2c_2} 
\end{equation*}
\begin{equation*}
 \mathcal{A}_{1,1}(d)  = \frac{1+\frac{d-1}{2}c_1}{(1+\frac{d-1}{2}c_1)(1-\frac{d+1}{2}c_1)+d^2c_2}-1.   
\end{equation*}

\noindent Relations of the form $\alpha^{r,d}_{i,0}$ are computed in Theorem \ref{thm:degszero} as:
\begin{equation*}
 \mathcal{A}_{i,0}(d)=
     \displaystyle \sum_{j=0}^{i} \frac{(-1)^{j}}{i! (l_1-l_2)^i}\binom{i}{j}\left[\left(1-\frac{\prod_{k = 0}^{d}\left(\frac{c_1}{2}+\left(k-\frac{d}{2}\right)(l_1-l_2) \right)}
     {\prod_{k = j}^{d-i+j}\left(\frac{c_1}{2}+\left(k-\frac{d}{2}\right)(l_1-l_2) \right)}
     \right)^{-1} -1\right], 
\end{equation*}
where $-l_1, -l_2$ are the Chern roots of the variables $c_1, c_2$, i.e. $c_1 = -l_1-l_2$ and $c_2 = l_1l_2$.
\end{maintheorem}

The integral Chow rings of the spaces $\Mcal_{0}(\Pro^r, d)$ exhibit a remarkable combinatorial structure. In Corollary \ref{cor:pol} it is shown that  for fixed $i,r$ the relations of the form $\alpha_{i,0}$ are polynomial in $d$, while in Corollary \ref{cor:diffop} one sees that, for fixed $d,r$, all relations of the form $\alpha_{i,0}$ may be extracted from a single two-variable monomial via the action of a differential operator, and  an appropriate Hadamard product of multi-variate power series. 

The proof of the Main Theorem is organized into three main stages: the first one consists of producing a presentation of $A^\ast(\Mcal_{0}(\Pro^r, d))$, where the ideal of relations has a very redundant generating set. The second consists in eliminating a large number of redundant generators, and in the third we compute the remaining relations. 

To obtain a presentation of $A^\ast(\Mcal_{0}(\Pro^r, d))$, we exhibit $\Mcal_{0}(\Pro^r, d)$ as a global quotient stack $[\widehat{U}_{r,d}/\GL_2]$ of an open set in affine space by the action of the group $\GL_2$, thus reducing the task to computing the $\GL_2$ equivariant Chow ring of $\widehat{U}_{r,d}$. It is at this moment that the hypothesis of $d$ being odd becomes necessary. By considering $\widehat{U}_{r,d}$ as a $\G_m$-bundle over its image in projective space, we reduce the computation of the equivariant Chow ring of $\widehat{U}_{r,d}$ to that of its image $U_{r,d}$: this is an open set in projective space, and its complement is covered by a $\GL_2$-equivariant envelope, whose connected components  $Z_i\cong \Pro^i\times \Pro^N$ are isomorphic to products of projective spaces. Using the standard excision sequence, we exhibit the Chow ring of $\Mcal_{0}(\Pro^r, d)$ as the quotient of the Chow ring of the ambient projective space by the ideal generated by the images of the pushfoward maps from the components of the envelope.

Elementary arguments suffice to show that the pushforward ${\pi_{i}}_\ast(h_i^k)$  of powers of the hyperplane classes from the left factor of the $Z_i$'s, with degree bounded by $i$, generate the ideal of relations.
In order to further reduce the set of necessary generators, a more subtle analysis is needed. One may replace the powers $h_i^k$  in the $i$-th component of the envelope with  suitably chosen monic polynomials in $h_i$. In many cases, such polynomials may be seen as arising from classes from lower envelopes, leading to an inductive argument that allows to narrow down the indispensable generators to those mentioned in the Main Theorem. The generating set we obtain is still not minimal, as can be seen from the computations in Section \ref{sec:planecubics}. Experimental computations led to the following conjecture.

\begin{conjrecall}
Consider the presentation of $ A^\ast(\Mcal_{0}(\Pro^r, d))$ from Theorem \ref{thmheresthepres}.  A minimal set of generators for the ideal of relations  is given by $\alpha_{1,0}^{r,d}, \alpha_{1,1}^{r,d}$ and the $\alpha_{p,0}^{r,d}$'s  where $p$ runs over all primes that divide $d$.
\end{conjrecall}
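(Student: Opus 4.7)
The plan is to separate the conjecture into a redundancy statement (the proposed set generates the same ideal as the set in Theorem \ref{ideal.of.relations}) and a minimality statement, and to tackle each by $p$-local methods.

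For redundancy, I would first establish that over $\Q$ the ideal is already generated by $\alpha^{r,d}_{1,0}$ and $\alpha^{r,d}_{1,1}$. The closed forms of $\mathcal{A}_{1,0}(d)$ and $\mathcal{A}_{1,1}(d)$ in the Main Theorem should allow one to express the denominator $(1+\frac{d-1}{2}c_1)(1-\frac{d+1}{2}c_1)+d^2c_2$ rationally in terms of lower-degree data, which in turn makes all higher $\alpha^{r,d}_{i,0}$ rationally dependent on $\alpha_{1,0}, \alpha_{1,1}$ via the explicit formula of Theorem \ref{thm:degszero}. The problem then reduces to an integral one: tracking the $p$-adic valuations of the denominators $i!(l_1-l_2)^i$ appearing in $\mathcal{A}_{i,0}(d)$, and showing that the contribution of any prime $p \nmid d$ is canceled by the numerator, while for $p \mid d$ only the single prime-power level $i = p$ carries essential information. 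To reduce $\alpha^{r,d}_{p^k,0}$ to $\alpha^{r,d}_{p,0}$ for $k \geq 2$, I would proceed by induction on $k$, exploiting the Hadamard product and differential operator structure observed in Corollary \ref{cor:diffop}; this structure suggests that $\alpha_{p^k,0}$ is obtained from $\alpha_{p,0}$ by applying a suitable operator that can in turn be realized inside the ideal generated by $\alpha_{1,0}, \alpha_{1,1}$, and $\alpha_{p,0}$. To eliminate $\alpha^{r,d}_{p,0}$ when $p \nmid d$, I would use the factor of $d$ in $\mathcal{A}_{1,0}(d)$ to witness invertibility of $p$ modulo $(\alpha_{1,0}, \alpha_{1,1})$, and thereby absorb the $1/p$-denominators produced by Theorem \ref{thm:degszero}.

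For minimality, I would work $p$-locally for each prime $p \mid d$, reducing modulo $p$, and exhibit in $\Z/p\Z[c_1,c_2]$ a distinguishing monomial whose coefficient is nonzero in $\alpha^{r,d}_{p,0}$ but vanishes in all polynomial multiples (in the appropriate total degree) of the other proposed generators. The explicit sum in Theorem \ref{thm:degszero} combined with Kummer-style $p$-adic valuations of binomial coefficients should produce such a separator. The relations $\alpha^{r,d}_{1,0}$ and $\alpha^{r,d}_{1,1}$ are minimal by degree considerations, as they sit in degrees lower than any other relation in the presentation.

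The main obstacle I expect is simultaneous control of multiple prime divisors of $d$: the formula for $\alpha^{r,d}_{i,0}$ couples the arithmetic of all primes through a common denominator $i!(l_1-l_2)^i$, so disentangling the prime contributions will likely require a careful choice of local coordinates and subtle inductive bookkeeping. A secondary worry is the polynomial-in-$d$ behavior of Corollary \ref{cor:pol}: a relation being a $\Z[d]$-polynomial does not by itself settle its integrality modulo combinations of other relations, and one will need to pass from generic $d$ statements to statements at specific $d$ with care. The explicit computations of Section \ref{sec:planecubics} should serve as essential sanity checks and, in the inductive step, may suggest the precise combinatorial identity that governs the reduction from $\alpha_{p^k,0}$ to $\alpha_{p,0}$.
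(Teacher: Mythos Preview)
The statement you are attempting to prove is labeled as a \emph{Conjecture} in the paper and is \emph{not} proved there: the authors explicitly say that experimental computations led to it, and report only computer verification for bounded ranges of $r$ and $d$ (Section~\ref{sec:examples}). So there is no paper proof to compare your proposal against; what follows is an assessment of your plan on its own merits.

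Your outline identifies the right two subgoals (redundancy and minimality) and the right tool ($p$-local analysis), but several of the key steps are not yet arguments. For redundancy when $p\nmid d$, you write that the factor of $d$ in $\mathcal{A}_{1,0}(d)$ ``witnesses invertibility of $p$ modulo $(\alpha_{1,0},\alpha_{1,1})$''. This is not correct as stated: the quotient $\Z[c_1,c_2]/(\alpha_{1,0},\alpha_{1,1})$ is (rationally) the Chow ring of a Grassmannian and certainly contains nonzero $p$-torsion-free classes; no prime becomes a unit there. What is true is that $\alpha_{1,0}^{r,d}$ is $d$ times an integral polynomial, so modulo $p$ (with $p\nmid d$) one still has two nontrivial relations to work with, but that is far from showing $\alpha_{p,0}$ lies in the ideal they generate over $\Z$. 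Likewise, the proposed reduction of $\alpha_{p^k,0}$ to $\alpha_{p,0}$ via the differential/Hadamard structure of Corollary~\ref{cor:diffop} is suggestive but not an argument: that corollary packages all $\alpha_{i,0}$ into a single generating series, yet gives no ideal-theoretic relation among them. You would need an explicit identity expressing $\alpha_{p^k,0}$ as a $\Z[c_1,c_2]$-combination of $\alpha_{1,0},\alpha_{1,1},\alpha_{p,0}$, and nothing in the paper hints at its form beyond the single worked example $\alpha_{2,0}$ for $d=3$, $r=2$.

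Your minimality sketch is on firmer ground: the degree argument does handle $\alpha_{1,0}$ and $\alpha_{1,1}$, and the idea of reducing mod $p$ and isolating a surviving monomial in $\alpha_{p,0}$ is exactly what the paper does in the $d=3$, $r=2$ example (reducing mod $(c_1,27)$). Turning that into a proof for all $r$ and all $p\mid d$, however, requires controlling the mod-$p$ reduction of the closed form in Theorem~\ref{thm:degszero} uniformly in $r$, which is the heart of the difficulty and is not addressed in your proposal. In short: your plan is a reasonable research program, but as written it does not close the conjecture, and the paper does not either.
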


Regardless of the optimal generating set, we set forth to compute all relations coming from the first component of the envelope or from the pushforward of fundamental classes of any other component of the envelope.
 We use two different techniques to compute the two types of relations. For the relations coming from the first envelope,  the embedding of $\Pro^r$ as a coordinate hyperplane in $\Pro^{r+1}$ gives the relations a recursive  structure which allows to reconstruct them for all values of $r$ from the degenerate $r=0$ case. Encoding the relations in generating function form, the recursions become a linear system of functional equations which is easily solved. Such recursive structure is present for the relations coming from higher envelopes, but it becomes substantially more computationally intensive to use this technique to extract the relations. Hence, for the fundamental class relations, we instead use the Atyiah-Bott localization theorem on the left factor of the envelope $Z_i$ to obtain an expression for the fundamental class supported at the fixed points. Such an expression is then readily pushed forward to obtain the formulas in the Main Theorem\footnote{Using the Atyiah-Bott localization might in principle cause the loss of some torsion classes, but we are applying the theorem to projective space, whose integral Chow ring is known to not have torsion.}. The drawback of this technique is that the answer is not produced immediately as a polynomial in $c_1, c_2$, but rather as a (non obviously) symmetric polynomial in the Chern roots $l_1, l_2$. It would be interesting to symmetrize formula \eqref{eq:closedformzerorel}.

\subsection{History, motivation, and considerations}

One of the key conceptual leaps in modern enumerative geometry has been the translation of enumerative questions into  intersection theory on appropriate moduli spaces of geometric objects. Thus, for example, the unique conic through five points in the plane is obtained as the intersection of five hyperplanes in the $\Pro^5$ of all conics, and twelve rational cubics through eight points arise as the intersection of eight hyperplanes and the discriminant hypersurface in the  $\Pro^9$ of plane cubics.

The rich and rapid development of the field  that followed in the late 1800's led to many exciting computations, some  of which were alas  incorrect; these early mistakes brought awareness of many delicate issues and technical difficulties in implementing the intuitive plan of counting geometric objects by intersecting subvarieties in moduli spaces. On the one hand, this led to Hilbert's fifteenth problem \cite{Hilbert}, requesting rigorous foundations for Schubert calculus, which we now understand as intersection theory on Grassmannians and flag varieties. On the other, new perspectives on geometric objects and their moduli were developed to tackle even the most classical problems. For example, the classical enumerative problem of counting the number of plane rational curves of degree $d$ through $3d-1$ general points was solved in \cite{Kon:WC} by introducing the moduli spaces of stable maps, that shifted the perspective by thinking of plane curves as the images of functions from abstract curves.

The main object of study in algebraic intersection theory \cite{Ful, 3264} is the Chow ring, a codimension-graded ring generated by equivalence classes of closed subvarieties up to rational equivalence, where the product  extends the notion of transverse intersection of subvarieties. A full understanding of the Chow ring of a moduli space gives in principle access to any enumerative geometric problem involving the geometric objects described by the moduli space. It should come as no surprise then that Chow rings of moduli spaces are typically very sophisticated, and hard to compute objects. To further complicate things, automorphisms of geometric objects cause most moduli spaces to be represented only by stacks, rather than varieties or schemes. 

Working with rational coefficients gives the significant advantage that the Chow ring of a Deligne-Mumford stack agrees with that of its coarse moduli space, see \cite[Theorem 4.40]{Ed:eg}. Still, the Chow ring of  moduli spaces of curves $\mathcal{M}_g$ are known only up to $g = 9$ \cite{Mum, Fab:Ch1, Fab:Ch2, Iza, PV, CanLar}, and for $\overline{\mathcal{M}}_g$  up to $g = 3$ \cite{Mum, Fab:Ch1}.

While  rational coefficients are often sufficient for any application to enumerative problems, they cause to lose all torsion classes, arguably containing interesting information about the geometry of the moduli space. For example, the stack of smooth hyperelliptic curves of genus $g$ has coarse moduli space which is a finite quotient of $M_{0,2g+2}$ (an open subset of $\mathbb{A}^{2g-1})$, and has therefore trivial Chow ring, i.e. $A^\ast(\Hcal_g, \Q) = \Q$.

Edidin and Graham (\cite{EG}) generalize work of Totaro (\cite{Tot:CR}) and  approach the study of Chow rings of moduli spaces with integer coefficients via equivariant geometry: if a moduli space is presented as a global stack quotient $[X/G]$, then its integral Chow ring is the equivariant Chow ring $A^\ast_G(X).$ 

This perspective was used to unveil rich torsion structure in the integral Chow ring of hyperelliptic loci \cite{EFh, DiLh}, and in the locus of non-hyperelliptic curves of genus $3$ \cite{DLFV}. The only moduli spaces of curves for which the integral Chow ring has been computed  are $\overline{\Mcal}_{2}$ (\cite{LarM2bar}) and $\overline{\Mcal}_{2,1}$ (\cite{DilPVis}).

Besides hyperelliptic loci, the only infinite family of moduli spaces for which the integral Chow ring is known is  given by Grassmannians $G(k,n)$. A structurally satisfying presentation for all these Chow rings is given, for example, in \cite[Theorem 5.26]{3264}
: $A^{\ast}(G(k,r+1), \Z)$ is a quotient of the polynomial ring $\Z[c_1, \ldots, c_k]$, where $c_i$ is a variable of degree $i$. The ideal of relations is generated by the homogeneous terms of degrees between $r+2-k$ and  $ r+1$ in the power series
expansion of\footnote{For us $c_i$ denotes the $i$-th Chern class of the standard representation of $GL_n$; in \cite{3264} they are choosing the dual of the standard representation, hence the different signs in the formulas.} 
\begin{equation} \label{Gras:gf}
    \frac{1}{1-c_1+c_2-\ldots +(-1)^k c_k}.
\end{equation}
We view \eqref{Gras:gf} as a generating function for the ideal of relations of all Grassmannians of $k$ planes as the ambient dimension $r+1$ varies.

When $k=2$, we have $Gr(2,r+1) \cong \Mcal_{0}(\Pro^r, 1)$, and it is easily seen that the terms of degree $r$ and $r+1$  of  $1/(1-c_1+c_2)$ agree with the generating functions $\mathcal{A}_{1,0}(1)$ and $\mathcal{A}_{1,1}(1)$ from the Main Theorem. 

In \cite{Pandhanonlin}, Pandharipande considers the function $\Phi_f: Gr(2,r+1) = \mathcal{M}_{0}(\Pro^r, 1)\to \mathcal{M}_{0}(\Pro^r, d)$ induced by post-composition with a fixed degree $d$ map $f:\Pro^r\to \Pro^r$. He shows that when working with rational coefficients, the pull-back 
\begin{equation}
   \Phi_f^\ast: A^\ast(\Mcal_{0}(\Pro^r, d),\Q)\to  A^\ast(\Mcal_{0}(\Pro^r, 1),\Q)
\end{equation}
is an isomorphism. It is interesting to observe how this result relates to the Main Theorem. First off, all relations $\alpha_{i,0}$ with $i>1$ are $i$-torsion, and therefore vanish when tensoring  coefficients with $\Q$. In order to check that relations from the first envelope agree, one must analyze the lift 
$$
\widehat{\Phi}_f: \widehat U_{r,1}\to \widehat U_{r,d},
$$
where the $\GL_2$-action on the two spaces is as in Proposition \ref{prop.reduction.GL2}. A non-trivial endomorphism $\varphi_f$ of $\GL_2$ is required to make the map $\widehat{\Phi}_f$ equivariant, and this induces the transformation $\varphi_f^\ast(c_1) = \frac{c_1}{d}, \varphi_f^\ast(c_2) = c_2 -\frac{(d^2-1)c_1^2}{4d^2}$. It is then immediate to check that the relations agree up to global factors which are powers of $d$ (irrelevant after tensoring with $\Q)$.

We make two important assumptions on the base field $k$. The first one is that $k$ is algebraically closed. We use this condition to unambiguously determine the degree of certain maps in Lemma \ref{lemma:coefficient-alpha-tilde}. Moreover, we assume that the base field has characteristic greater than $d$. This condition allows to work with localization formulas and to construct formula (\ref{eq:genfunzerorel}).

\subsection{Notation}
There are four main discrete invariants that  take different values in this work: 
\begin{description}
\item[$r$] the dimension of the target projective space;
\item[$d$]  the degree of the  map $f:\Pro^1\to \Pro^r$ or equivalently of the $(r+1)$ defining polynomials;
\item[$i$] the component of the envelope parameterizing polynomials that contain a common factor of degree $i$;
\item[$k$] the power of the hyperplane class on the left factor of $\Pro(W_i)\times \Pro(W_{d-i}^{\oplus r+1})$.
\end{description}
As a  consequence, the generators of the ideal of relations for the presentation of  $A^\ast(\Mcal_{0}(\Pro^r, d))$ are polynomials $\alpha_{i,k}^{r,d}$ depending on four indices. 
\begin{notconv}To lighten this notation, we adopt the convention of suppressing indices corresponding to discrete invariants that remain fixed throughout a section. Thus for example in Section \ref{sec:genrel}, where $r$ and $d$ are fixed, the relations are  denoted $\alpha_{i,k}$; whereas in Sections \ref{sec:relfromz1}, where we consider maps comparing different $\Pro^r$'s, we mantain the superscript $r$. A similar convention is adopted for any other quantity depending on these four discrete invariants. 
\end{notconv}

The Chow ring  $A^\ast(\Mcal_{0}(\Pro^r, d))$ is presented as a quotient of the equivariant Chow ring of $\Pro(W_{d}^{\oplus r+1})$, where one of the relations is $H - (d+1)c_1/2$. One may then eliminate the hyperplane class $H$ and regard $A^\ast(\Mcal_{0}(\Pro^r, d))$ as a quotient of the polynomial ring $\Z[c_1, c_2]$. We adopt the following notation to deal with this phenomenon.
\begin{notconv}\label{convsubstitute}
We denote the class $\pi_{i_\ast}(h_i^k)$ by $\alpha_{i,k}(H)$ when we wish to regard it as a Chow class on $\Pro(W_{d}^{\oplus r+1})$, by $\alpha_{i,k}$  when we wish to regard it as a Chow class on $\Mcal_{0}(\Pro^r, d)$, so that $\alpha_{i, k}=\alpha_{i,k}((d+1)c_1/2)$.
This convention extends to any other class with such double personality.
\end{notconv}

Some of the commonly recurring notation   throughout the paper is collected in Table \ref{tab:notat}.

\begin{table}[]%\label{table}
\centering
\begin{tabular}{l|l}
$W_d$ & homogeneous polynomials of degree $d$ in two variables\\
$U_{r,d}\subset \Pro(W_d^{r+1})$ & $(r+1)$-tuples of polynomials containing no common factor of positive degree\\
$\Delta_{r,d}\subset \Pro(W_d^{r+1})$ & the complement of $U_{r,d}$\\
$\widehat{U}_{r,d}$ & the affine cone over $U_{r,d}$\\
$l_1, l_2$ & torus weights, Chern roots for the dual of the standard representation of $\GL_2$\\
$c_1,c_2$ & Chern classes of the standard representation of $\GL_2$\\
$H$ & equivariant hyperplane class for $\Pro(W_d^{r+1})$\\
$P_{r,d}(H)$ & basic relation in equivariant Chow ring of $\Pro(W_d^{r+1})$, see \eqref{eq:Prd}\\
$P_{d}$ & $P_{0,d}((d+1)c_1/2)$\\
$Z_i\subset \Delta_{r,d}$ & stratum of $(r+1)$ polynomials with largest common factor of degree $i$\\
$\wt{Z}_i$ & $\Pro(W_i) \times \Pro \left( W_{d-i}^{\oplus r + 1} \right)$, with a natural map $\pi_i$, an isomorphism onto the closure of $Z_i$\\
$h_i\in A^*_T(\wt{Z}_i)$ & equivariant hyperplane class for left factor\\
$\eta_i\in A^*_T(\wt{Z}_i)$ & equivariant hyperplane class for right factor\\
$\alpha_{i,k}^{r,d}$ & $\pi_{i_\ast}(h_i^k)$
\end{tabular}
    \caption{Recurring notation.}
     \label{tab:notat}
\end{table}

\subsection{Acknowledgements} We would like to thank Seth Ireland for writing the code that gave extensive verifications of Conjecture \ref{conj:gens}, and Chris Peterson for several conversations related to the project. We are grateful to Rahul Pandharipande, Jim Bryan, Tom Graber and the participants of the {\it intercontinental algebraic geometry and moduli zoominar} as well as Ravi Vakil, Jesse Kass, Peter Petrov and the participants of the {\it Stanford virtual algebraic geometry seminar} for the many interesting questions and observations following the presentation of this work. We gratefully acknowledge the following funding institutions: R.C. is supported by NSF grant DMS-2100962,  D.F. by Simons collaboration grant 360311.

%%%%%%%%%%%%%%%%%%%%%%%%%%%%%%%
\section{A Presentation of the Stack $\Mcal_{0}(\Pro^r, 2s+1)$}
%%%%%%%%%%%%%%%%%%%%%%%%%%%%%%%

Let $k$ be an algebraically closed field of characteristic 0 or larger than $2s+1$. We refer the reader to \cite{FP} for a general definition of the stack of stable maps $\Mcal_{g,n}(X,\beta)$ where $X$ is a projective variety and $\beta$ is a class in $A_1(X,\Z)$. In this paper we focus on the special case $\Mcal_{0}(\Pro^r,d)$ of maps from rational curves to projective space. For completeness, we recall the  definition.
\begin{defi}
Let $\Mcal_{0}(\Pro^r,d)$ be the category whose objects are diagrams
\[
\xymatrix{
C \ar[d]_{\pi}  \ar[r]^{f} & \Pro^r\\
S
}
\]
such that:
\begin{itemize}
\item $S$ is a $k$-scheme,
\item the morphism $\pi: C \to S$ is a projective flat family of curves isomorphic to $\Pro^1$,
\item the degree of $f^*\Ocal_{\Pro^r}(1)$ on the geometric fibers of $\pi: C \to S$ is $d$.
\end{itemize}
As a consequence of the third bullet point,  the restriction of $f$ on the geometric fibers of $\pi: C \to S$ is in particular not constant. Arrows are cartesian diagrams:
\[
\xymatrix{
C_1 \cart \ar[r]^{\psi} \ar[d]_{\pi_1} \ar@/^2.0pc/[rr]^{f_1=f_2 \circ \psi}& C_2 \ar[d]^{\pi_2}  \ar[r]^{f_2} & \Pro^r\\
S_1 \ar[r]^{\varphi}& S_2
}
\]
\end{defi}

The category $\Mcal_{0}(\Pro^r,d)$ is naturally a category fibered in groupoids over the category $(\Sch/k)$ of $k$-schemes. It is a proper Deligne-Mumford stack, see, for example, \cite[Theorem 3.14]{BM}.
As an example of a moduli point with non-trivial isotropy, let $f_d: \Pro^1 \to \Pro^1$ be defined as $f_d(x:y)  = (x^d:y^d)$, and let $\iota_L:\Pro^1\to \Pro^r$ be the inclusion of $\Pro^1$ as a line in $\Pro^r$; then $(\Pro^1, \iota_L\circ f_d)\in \Mcal_{0}(\Pro^r,d)$  has isotropy group $\mu_d$.

Observe that $\Mcal_{0}(\Pro^r,d)$ has a structure of a global quotient by a linear algebraic group. Let $E$ be the standard representation of $\GL_2$. We identify the projective line as
\[
\Pro^1 \cong \Pro(E).
\]
Consider the vector space of homogeneous forms of degree $d$ over $\Pro^1$
\[
W_d:= H^0(\Pro^1,\Ocal_{\Pro^1}(d)) = \Sym^d(E^\vee)
\]
where $E^\vee$ is the dual of the representation $E$.

The set of regular maps 
$
\Pro^1 \to \Pro^r
$
of degree $d$ is an open subset $U_{r,d}$ of $\Pro \left( W_d^{\oplus r+1} \right)$.  More precisely, a general element $f$ of $U_{r,d}$ can be written as
\[
f= \left [ f_1(x,y), \dots, f_{r+1} (x,y)  \right]
\]
where
\begin{itemize}
\item for all $i=1, \dots, r+1$, the polynomial in two variables $f_i(x,y)$ is homogeneous of degree $d$;
\item the map $f$ is free of base points, in other words, the polynomials $f_i(x,y)$ have no common factors.\end{itemize}

Call $\Delta_{r,d}$ the complement of $U_{r,d}$ in $\Pro \left( W_d^{\oplus r+1} \right)$, and $\widehat{U}_{r,d}$ the affine cone over $U_{r,d}$, that is the preimage of $U_{r,d}$ via the tautological map
\[ 
W_d^{\oplus r+1} \backslash 0 \to \Pro \left( W_d^{\oplus r+1} \right).
\]

One has the following isomorphism of stacks (see \cite[Section 2]{FP} for a more general result):

\begin{equation}\label{basic.isom}
\Mcal_{0}(\Pro^r, d) \cong \left[ U_{r,d} / \PGL_2 \right]
\end{equation}\label{eq:actpgl2}
where the action of $\PGL_2$ on $U_{r,d}$ is given by:
\begin{equation}
(A \cdot [f_1, f_2, \dots, f_{r+1}])(x,y) = [f_1 (A^{-1}(x,y)), f_2(A^{-1}(x,y)), \dots, f_{r+1}(A^{-1}(x,y))].
\end{equation}
The use of the inverse matrix is necessary to have a well defined left action. 
\begin{lemma}\label{lemma.normal.subgroup}
Let $H$ be a normal subgroup of a linear group $G$ and let $X$ be a quasi-projective scheme equipped with a $G$-action. Assume that $H$ acts freely on $X$ so that the quotient $X/H$ is a quasi-projective scheme as well. Then we have an isomorphism of quotient stacks:
\[
\left[ X / G \right] \cong \left[ (X/H) / (G/H) \right]
\]
\end{lemma}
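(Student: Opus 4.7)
The plan is to construct explicit quasi-inverse morphisms between the two quotient stacks by passing through the groupoid/torsor description. Recall that for any $G$-action on a quasi-projective scheme $Y$, an $S$-point of $[Y/G]$ is a pair $(P\to S,\, f\colon P\to Y)$ where $P\to S$ is a principal $G$-bundle and $f$ is $G$-equivariant; morphisms are pullback diagrams intertwining the $f$'s. I will use this description on both sides.

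First I would define a morphism $\Phi\colon [X/G]\to [(X/H)/(G/H)]$. Given $(P\to S, f\colon P\to X)\in [X/G](S)$, note that $H$ acts freely on $P$ (because it acts freely on each fiber, $P\to S$ being a $G$-torsor), so the fppf quotient $P/H$ exists as a scheme over $S$ and inherits a free $G/H$-action making it a $(G/H)$-torsor over $S$. Because $H$ acts freely on $X$ by hypothesis and $f$ is $H$-equivariant, $f$ descends to a $(G/H)$-equivariant morphism $\bar f\colon P/H\to X/H$. Sending $(P,f)\mapsto (P/H,\bar f)$, with the evident action on morphisms (which are pulled back through the quotient), defines $\Phi$.

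Going the other way, I would define $\Psi\colon [(X/H)/(G/H)]\to [X/G]$ by fibered product. Given $(Q\to S, g\colon Q\to X/H)$, set
\[
P \;:=\; Q\times_{X/H} X.
\]
Since $X\to X/H$ is an $H$-torsor, the projection $P\to Q$ is an $H$-torsor; composed with the $(G/H)$-torsor $Q\to S$ this yields a $G$-torsor $P\to S$, the $G$-action coming from the commuting $H$-action on the second factor and the $(G/H)$-action on the first, assembled via the extension $1\to H\to G\to G/H\to 1$ (this is the point at which normality of $H$ is used). The second projection $f\colon P\to X$ is $G$-equivariant by construction.

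The main verification, and the one I expect to require the most care, is the compatibility of these two constructions up to canonical 2-isomorphism. For $\Psi\circ\Phi$, one checks that $P/H\times_{X/H}X\cong P$ canonically, which follows because $P\to P/H$ is itself the pullback of $X\to X/H$ along $\bar f$ (both are $H$-torsors over $P/H$ restricting to the same torsor over $X/H$, and they are identified by $f$). For $\Phi\circ\Psi$, one checks that $(Q\times_{X/H}X)/H\cong Q$ canonically, which holds because quotienting by $H$ commutes with fppf base change and $X/H\to X/H$ is the identity. The only real obstacle is bookkeeping: ensuring that the $G$- and $(G/H)$-actions match, that the various quotients and fiber products behave well fppf-locally on $S$, and that the 2-morphisms are functorial. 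Since $G$ is a linear algebraic group, $H$ is a closed normal subgroup, and everything in sight is quasi-projective, the required descent and representability statements are standard, and the construction carries through.
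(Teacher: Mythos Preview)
Your argument is correct in outline and in fact supplies far more detail than the paper, which does not prove the lemma at all but simply cites an external reference (Heinloth's notes on stacks). So there is no competing strategy to compare against; your torsor-level construction of quasi-inverse functors is exactly the standard way such a statement is established.

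One phrase in the construction of $\Psi$ is loose enough to warrant tightening. You describe the $G$-action on $P = Q\times_{X/H}X$ as ``coming from the commuting $H$-action on the second factor and the $(G/H)$-action on the first, assembled via the extension $1\to H\to G\to G/H\to 1$.'' Taken literally this is not quite right: the $H$- and $(G/H)$-actions on the two factors do not commute in any obvious sense, and an arbitrary tower of an $H$-torsor over a $(G/H)$-torsor need not carry a $G$-action without a choice of splitting. What actually works is the direct formula
\[
g\cdot(q,x)\;=\;(\bar g\cdot q,\ g\cdot x),
\]
where $\bar g$ is the image of $g$ in $G/H$. Normality of $H$ is precisely what makes the $(G/H)$-action on $X/H$ well-defined and the quotient map $X\to X/H$ $G$-equivariant, so this formula preserves the fiber product. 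With the action written this way, the check that $P\to S$ is a $G$-torsor and that $\Phi$, $\Psi$ are quasi-inverse goes through exactly as you indicate.
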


\begin{dem}
See, for example, \cite[Example 3.3]{Hei05}.
\end{dem}

\begin{rmk} A consequence of Lemma \ref{lemma.normal.subgroup} is the induced isomorphism of equivariant intersection rings:
\[
A^*_G(X) \cong A^*_{G/H}(X/H).
\]
For a direct proof of this isomorphism see \cite[Lemma 2.1]{MV}.
\end{rmk}

From now on, assume the degree $d$ to be odd.
\begin{propos}\label{prop.reduction.GL2}
Let $d=2s+1$ be a positive odd integer. The stack $\Mcal_{0}(\Pro^r, d)$ is isomorphic to the quotient stack
\[
\Mcal_{0}(\Pro^r, d) \cong \left[ \widehat{U}_{r,d} / \GL_2 \right]
\]
where the action of $\GL_2$ is
\[
A \cdot (f_0, f_1, \dots, f_{r}) (x,y) = \det{A}^{s+1} (f_0 (A^{-1}(x,y)), f_1(A^{-1}(x,y)), \dots, f_{r}(A^{-1}(x,y))).
\]
\end{propos}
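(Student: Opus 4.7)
The plan is to deduce the proposition from the already established presentation $\Mcal_{0}(\Pro^r, d) \cong [U_{r,d}/\PGL_2]$ in \eqref{basic.isom}, by applying Lemma \ref{lemma.normal.subgroup} with $G = \GL_2$, $H = \G_m \hookrightarrow \GL_2$ the center of scalar matrices $t \mapsto tI$, and $X = \widehat{U}_{r,d}$. Since $G/H \cong \PGL_2$, it suffices to check that (i) the formula in the statement really defines a left action of $\GL_2$ on $\widehat{U}_{r,d}$, (ii) the restriction of this action to $\G_m$ is free and agrees with the standard scaling on the affine cone, and (iii) the induced $\PGL_2$-action on $\widehat{U}_{r,d}/\G_m = U_{r,d}$ agrees with the one in \eqref{eq:actpgl2}.

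For (i), the associativity $(AB)\cdot f = A\cdot(B\cdot f)$ reduces to the multiplicativity of $\det$ together with the identity $f\circ(AB)^{-1} = (f\circ B^{-1})\circ A^{-1}$, which handled the $\PGL_2$ case and carries over verbatim once the scalar factor $\det(A)^{s+1}\det(B)^{s+1} = \det(AB)^{s+1}$ is recorded. For (ii) — the crucial step — take $A = tI$. Then $A^{-1}(x,y) = (t^{-1}x, t^{-1}y)$, and since each $f_i$ is homogeneous of degree $d = 2s+1$ one has $f_i(A^{-1}(x,y)) = t^{-d}f_i(x,y)$; combined with $\det(A)^{s+1} = t^{2(s+1)} = t^{d+1}$, the action becomes
\[
(tI)\cdot f = t^{d+1}\cdot t^{-d} f = t\cdot f,
\]
namely the standard scaling on the cone. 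This is precisely where the hypothesis that $d$ is odd enters: the integer $s+1$ has been tuned so that $2(s+1) - d = 1$, and such an integer exists only for $d$ odd. The scaling action of $\G_m$ on $\widehat{U}_{r,d}$ is free with geometric quotient $U_{r,d}$ by the very definition of the affine cone.

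For (iii), the $\det(A)^{s+1}$ factor is absorbed under the quotient $W_d^{\oplus r+1}\setminus 0 \to \Pro(W_d^{\oplus r+1})$, so the induced $\GL_2/\G_m$-action on $U_{r,d}$ descends to $[A]\cdot[f](x,y) = [f(A^{-1}(x,y))]$, matching \eqref{eq:actpgl2}. Lemma \ref{lemma.normal.subgroup} now yields
\[
\bigl[\widehat{U}_{r,d}/\GL_2\bigr] \;\cong\; \bigl[(\widehat{U}_{r,d}/\G_m)/(\GL_2/\G_m)\bigr] \;\cong\; [U_{r,d}/\PGL_2] \;\cong\; \Mcal_{0}(\Pro^r, d),
\]
which is the claimed isomorphism.

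There is no real obstacle: the only content is the correct choice of twist $\det(A)^{s+1}$, which is forced by the requirement that the center act as standard scaling and which exists as an integer power precisely because $d$ is odd. Everything else is formal manipulation with Lemma \ref{lemma.normal.subgroup} and the $\PGL_2$-presentation \eqref{basic.isom}.
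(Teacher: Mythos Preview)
Your proof is correct and follows essentially the same approach as the paper: apply Lemma \ref{lemma.normal.subgroup} with $X=\widehat{U}_{r,d}$, $G=\GL_2$, $H=\G_m$, compute that the center acts by the standard scaling (using $2(s+1)-d=1$), and conclude via \eqref{basic.isom}. Your version is slightly more explicit in verifying that the formula gives a left action and in isolating where the oddness of $d$ is used, but the argument is the same.
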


\begin{dem}
Apply Lemma \ref{lemma.normal.subgroup} with the substitutions:
\[
X= \widehat{U}_{r,d}; \quad G=\GL_2; \quad H=\G_m.
\]
 $\G_m$ is normal in $\GL_2$; one must describe the induced action of $\G_m$ on $\widehat{U}_{r,d} \subset W_d^{\oplus r+1}$.

Refer to the exact sequence of groups:
\[
1 \to \G_m \xrightarrow{\varphi} \GL_2 \xrightarrow{\pi} \PGL_2 \to 1
\]

Let $\lambda \in \G_m$ and $\varphi(\lambda)= \left [ \begin{array}{cc} \lambda & 0\\ 0 & \lambda \end{array} \right ]=:A$, we have
\begin{eqnarray*}
A \cdot (f_1, f_2, \dots, f_{r+1}) (x,y) &=& \det{A}^{s+1} (f_1 (A^{-1}(x,y)), f_2(A^{-1}(x,y)), \dots, f_{r+1}(A^{-1}(x,y))) =\\
& = & \lambda^{2(s+1)} \frac{1}{\lambda^{d}}(f_1, f_2, \dots, f_{r+1}) (x,y) = \lambda (f_1, f_2, \dots, f_{r+1}) (x,y),
\end{eqnarray*}
therefore $\widehat{U}_{r,d}/\G_m = U_{r,d}$. Notice that the action of $\G_m$ is free on $\widehat{U}_{r,d}$.  Lemma \ref{lemma.normal.subgroup} implies
%we get the following isomorphism:
\[
\left[ \widehat{U}_{r,d} / \GL_2 \right] \cong \left[ U_{r,d} / \PGL_2 \right],
\]
where the action of $\PGL_2$ over $U_{r,d}$ is  as in \eqref{eq:actpgl2}. We conclude thanks to isomorphism (\ref{basic.isom}).
\end{dem}

%%%%%%%%%%%%%%%%%%%%%%%%%%%%%%%
\section{Generators and  Relations}
\label{sec:genrel}
%%%%%%%%%%%%%%%%%%%%%%%%%%%%%%%

By Proposition \ref{prop.reduction.GL2}, we are reduced to computing the equivariant intersection ring $A^*_{\GL_2}\left( \widehat{U}_{r,d} \right)$.  
First, we recall some facts about equivariant Chow rings we will use, and set notation. Let $T$ denote the maximal torus for $\GL_2$ represented by diagonal matrices, and consider the induced morphism $Bi: BT \to B\GL_2$.  We denote by $E$  the standard representation of $\GL_2$, which we think of as (the pull-back to the point via the quotient map $pt \to B\GL_2$ of) a rank two vector bundle over $B\GL2$. Since  the Chern classes of $E$ will be frequently used, we denote $c_i(E)$ simply by $c_i$. The pull-back $Bi^\ast(E^\vee)$ splits as the direct sum of two line bundles on $BT$: the characters  $\lambda_1, \lambda_2$ are given by the two coordinate projections of $T$.

Denoting $A^{*}_T = A^{ *}_T(pt.)$, it is known (see e.g.  \cite[Chapter 4]{Mirror-symmetry-book}) that $A^{*}_T=\Z\left[  l_1, l_2  \right]$, with $l_i =  c_1\left( \lambda_i \right)$. By a slight abuse of notation we also denote by $l_i $  the Chern roots of the vector bundle $E^\vee$, since we have
\begin{eqnarray*}
Bi^\ast c_1&=&-(l_1+l_2)\\
Bi^\ast  c_2 &=& l_1 l_2.
\end{eqnarray*}
 The Weyl group $S_2$  acts on $A^*_T$ by permuting the classes $l_i$ and  $A^*_{\GL_2}=\left( A^*_T \right)^{S_2}$ (see \cite[Proposition 6]{EG}).

Consider the following commutative $\GL_2$-equivariant diagram
\[
\xymatrix{
\widehat{U}_{r,d} \ar[r]^(0.40){j} \ar[d]^{\pi} & W_d^{\oplus r+1} \ar[d]^{\pi} \backslash 0\\
U_{r,d} \ar[r]^(0.40){j} & \Pro \left( W_d^{\oplus r+1} \right)
}
\]
where the horizontal arrows are the natural open inclusions and the vertical arrows are the natural quotient maps by the action of $\G_m$. The vertical maps may be interpreted as principal $\G_m$-bundles associated to the $\GL_2$-equivariant line bundle $\Dcal^{\otimes s+1} \otimes \Ocal(-1)$, where $\Ocal(-1)$ is the tautological bundle over $\Pro \left( W_d^{\oplus r+1} \right)$ and $\Dcal$ is the one dimensional representation of $\GL_2$ associated to the determinant $\bigwedge^2 E$. The first Chern class of $\Dcal$ is $c_1(\Dcal)=c_1$, therefore we have
\[
c_1 \left( \Dcal^{\otimes s+1} \otimes \Ocal(-1) \right)=(s+1)c_1 - H
\]
where $H$ is the canonical equivariant lift of the hyperplane class of $\Pro \left( W_d^{\oplus r+1} \right)$. By arguing as in Lemma 3.2 of \cite{EFh}, the pull-back morphism
\[
\pi^*: A^*_{\GL_2} \left( U_{r,d} \right) \to A^*_{\GL_2} \left( \widehat{U}_{r,d} \right)
\]
is surjective and its kernel is generated by $H-(s+1)c_1$. We may therefore determine  $A^*_{\GL_2} \left( \widehat{U}_{r,d} \right)$ from a presentation of the ring $A^*_{\GL_2} \left( U_{r,d} \right)$ by applying the substitution $H=(s+1)c_1$.

In order to compute $A^*_{\GL_2} \left( U_{r,d} \right)$, we consider the following exact sequence of $A^*_{\GL_2}$-modules:
\[
A^*_{\GL_2}\left( \Delta_{r,d} \right) \xrightarrow{i_*} A^*_{\GL_2}\left( \Pro \left( W_d^{\oplus r+1} \right) \right) \xrightarrow{j^*} A^*_{\GL_2}\left( U_{r,d} \right) \to 0.
\]

Using standard techniques as in \cite[Section 3.2]{Fu-Vi}, we have the following isomorphism:
\begin{equation}
A^*_{\GL_2}\left( \Pro \left( W_d^{\oplus r+1} \right) \right) \cong \frac{\Z[c_1,c_2,H]}{(P_{r,d}(H))}
\end{equation}
where
\begin{equation}\label{eq:Prd}
P_{r,d}(H)= \prod_{k=0}^d (H+(d-k)l_1 + kl_2)^{r+1}.
\end{equation}

In conclusion, we have:
\begin{equation}\label{isom}
A^* \left( \Mcal_{0}(\Pro^r, d) \right)
\cong \frac{A^*_{\GL_2}\left( \Pro \left( W_d^{\oplus r+1} \right) \right)}{(\im(i_*), H-(s+1)c_1)} \cong \frac{\Z[c_1,c_2,H]}{(\im(i_*), H-(s+1)c_1,P_{r,d}((s+1)c_1))},
\end{equation}
where $\im(i_*)$ is the image of the push-forward $i_*$.
The goal is now to compute generators for the ideal $\im(i_*)$.

Let $X$ be a $G$-scheme and $\Delta \xrightarrow{i} X$ an equivariant closed embedding. A consolidated method to determine the image of the group homomorphism
\[
A_G^*(\Delta) \xrightarrow{i_*} A_G^*(X),
\]
is to use a so-called equivariant envelope of $\Delta$. Recall that an {\it envelope} $\wt{\Delta} \xrightarrow{\widetilde{\pi}} \Delta$, see \cite[Definition 18.3]{Ful}, is a proper map such that for every subvariety $V$ of $\Delta$, there is a subvariety $\wt{V}$ of $\wt{\Delta}$ such that the morphism $\widetilde{\pi}$ maps $\wt{\Delta}$ birationally onto $V$. In the category of $G$-schemes, $\widetilde{\pi}$ is called an {\it equivariant envelope}, see \cite[Section 2.6]{EG}, if $\widetilde{\pi}$ is $G$-equivariant and if one can choose $\wt{V}$ to be $G$-invariant whenever $V$ is $G$-invariant. An equivariant envelope $\wt{\Delta} \xrightarrow{\widetilde{\pi}} \Delta$ for a closed subscheme $\Delta \xrightarrow{i} X$ is especially helpful when one can explicitly describe the Chow group of $\wt{\Delta}$ and the image of the group homomorphism
\[
A_G^*(\wt{\Delta}) \xrightarrow{(i \circ \widetilde{\pi})_*} A_G^*(X).
\]
Since the group homomorphism $\widetilde{\pi}_\ast$ is surjective (see \cite[Lemma 3]{EG} and \cite[Lemma 18.3(6)]{Ful}), one has the following result.

\begin{theorem}\label{thm:ipi}
Let $X$ be a $G$-scheme and $\Delta \xrightarrow{i} X$ an equivariant closed embedding. If $\wt{\Delta} \xrightarrow{\widetilde{\pi}} \Delta$ is an equivariant envelope of $\Delta$, then %we have the identity
\[
(i \circ \widetilde{\pi})_* \left( A_{G}^* ( \wt{\Delta}) \right) = { i_* }\left( A_{G}^* \left( \Delta \right) \right).
\]
\end{theorem}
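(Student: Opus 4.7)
The proof is essentially a direct application of the functoriality of proper pushforward combined with the surjectivity of $\widetilde{\pi}_\ast$.

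The plan is to reduce the statement to the assertion that $\widetilde{\pi}_\ast : A^\ast_G(\wt{\Delta}) \to A^\ast_G(\Delta)$ is surjective. First, I would observe that both $i$ and $\widetilde{\pi}$ are proper $G$-equivariant morphisms ($i$ is a closed embedding, hence proper; $\widetilde{\pi}$ is proper by the definition of an envelope). Therefore the composition $i \circ \widetilde{\pi}$ is a proper $G$-equivariant morphism, and by the functoriality of proper equivariant pushforward (constructed in \cite{EG} as the equivariant extension of the standard operational formalism of \cite{Ful}), one has the identity
\[
(i \circ \widetilde{\pi})_\ast = i_\ast \circ \widetilde{\pi}_\ast
\]
as group homomorphisms $A^\ast_G(\wt{\Delta}) \to A^\ast_G(X)$.

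Next, I would invoke the surjectivity of $\widetilde{\pi}_\ast$. This is the content of \cite[Lemma 3]{EG}, which is the equivariant analogue of \cite[Lemma 18.3(6)]{Ful}. The non-equivariant statement follows from the defining property of an envelope: every subvariety $V \subseteq \Delta$ is the birational image under $\widetilde{\pi}$ of some subvariety $\wt{V} \subseteq \wt{\Delta}$, so $\widetilde{\pi}_\ast[\wt{V}] = [V]$, and the cycle classes $[V]$ generate $A^\ast(\Delta)$. The equivariant case is obtained by the standard Totaro--Edidin--Graham approximation by finite-dimensional quotients, using that equivariant envelopes remain envelopes after restriction to finite-dimensional approximations of the total space of $EG$.

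Combining the two ingredients gives
\[
(i \circ \widetilde{\pi})_\ast\bigl(A^\ast_G(\wt{\Delta})\bigr) = i_\ast\bigl(\widetilde{\pi}_\ast(A^\ast_G(\wt{\Delta}))\bigr) = i_\ast\bigl(A^\ast_G(\Delta)\bigr),
\]
which is the desired equality. There is essentially no obstacle here beyond quoting the two references; the only subtle point worth a sentence of justification is that both properness and the envelope property are preserved under the approximations used to define equivariant Chow groups, so the surjectivity result transfers from the non-equivariant setting without modification.
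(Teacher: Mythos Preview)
Your proof is correct and follows exactly the same approach as the paper: the paper does not give a formal proof environment for this theorem but simply states it as an immediate consequence of the surjectivity of $\widetilde{\pi}_\ast$, citing \cite[Lemma 3]{EG} and \cite[Lemma 18.3(6)]{Ful}. Your write-up is in fact more detailed than the paper's, spelling out the functoriality step $(i\circ\widetilde{\pi})_\ast = i_\ast\circ\widetilde{\pi}_\ast$ explicitly.
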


Returning to the computation, we  construct an equivariant envelope for the locus of degenerate maps $\Delta_{r,d}$.
\begin{defi}
For every $i=1, \dots, d$, denote by  $Z_i$  the subspace of $\Pro \left( W_d^{\oplus r+1} \right)$ representing $(r+1)$-tuples of polynomials with a common factor of degree $i$, but not $i+1$:
\begin{equation}
 Z_i = \{(f_1, \ldots, f_{r+1}) | \deg(\gcd(f_1, \ldots, f_{r+1}) = i \}.
\end{equation}
\end{defi}

The family $\{ Z_i\}_{i=1, \dots, d}$ is an equivariant stratification of $\Delta_{r,d}$, in the sense of  \cite[Definition 1.2]{DLFV}.

Define
\begin{equation}
 \wt{Z}_i := \Pro(W_i) \times \Pro \left( W_{d-i}^{\oplus r + 1} \right)   
\end{equation}
and the morphisms
\begin{eqnarray}\label{eq:defincl}
\pi_i: \wt{Z}_i  &\to & \Pro \left( W_d^{\oplus r+1} \right) \nonumber \\
([c(x,y)],[g_1(x,y), g_2(x,y), \dots, g_{r+1}(x,y)]) &\mapsto & [c(x,y) \cdot g_1(x,y), c(x,y) \cdot g_2(x,y), \dots, c(x,y) \cdot g_{r+1}(x,y)].
\end{eqnarray}

\begin{propos}\label{prop.equivariant.envelope}

Denote by $\wt{Z} := \bigsqcup_{i=1, \dots, d} \wt{Z}_i$ and by $\pi := \sqcup \pi_i$. 
Denoting by $\wt\pi$ the restriction of $\pi$ onto its image $\Delta_{r,d}$, we have that $$\wt\pi: \wt{Z}\to \Delta_{r,d}$$ is an equivariant envelope on $\Delta_{r,d}$.
\end{propos}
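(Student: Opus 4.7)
The plan is to verify each condition in the definition of a $\GL_2$-equivariant envelope for $\wt\pi:\wt Z\to \Delta_{r,d}$. Since $\wt Z$ is a finite disjoint union of products of projective spaces, it is proper, and each $\pi_i$ is a morphism to the separated scheme $\Pro(W_d^{\oplus r+1})$, so $\wt\pi$ is proper; moreover, equivariance of each $\pi_i$ is immediate from the compatibility of polynomial multiplication with the $\GL_2$-action on $W_d$ on both factors.

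The heart of the argument is to produce, for every irreducible subvariety $V\subset\Delta_{r,d}$, an irreducible subvariety $\wt V\subset\wt Z$ mapping birationally onto $V$. Here I would first show that $\pi_i$ restricts to an isomorphism $\pi_i^{-1}(Z_i)\xrightarrow{\sim} Z_i$: given $(f_1,\dots,f_{r+1})\in Z_i$, the greatest common divisor is uniquely determined up to a scalar precisely because its degree equals $i$, and the quotient tuple $(g_1,\dots,g_{r+1})$ then has no common factor of positive degree, giving a unique preimage in $\wt Z_i$. Since $k$ is algebraically closed, the factorization of a polynomial of degree $d$ into irreducibles is well behaved, and no subtleties arise. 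Thus the image of $\pi_i$ is the closure $\overline{Z_i}\subset\Delta_{r,d}$, and $\pi_i:\wt Z_i\to\overline{Z_i}$ is birational.

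Next, because $\{Z_i\}_{i=1,\dots,d}$ is a stratification of $\Delta_{r,d}$, the generic point of any irreducible $V\subset\Delta_{r,d}$ lies in a unique stratum $Z_i$, so $V\subset\overline{Z_i}$ and $V\cap Z_i$ is open and dense in $V$. Setting $\wt V:=\overline{\pi_i^{-1}(V\cap Z_i)}\subset\wt Z_i$, the restriction $\pi_i|_{\wt V}$ is proper, surjective onto $V$, and an isomorphism over the dense open $V\cap Z_i$ by the previous step, hence birational. For the equivariant clause, if $V$ is $\GL_2$-invariant, then so is $V\cap Z_i$ (since $Z_i$ is $\GL_2$-invariant), and consequently its preimage under the equivariant map $\pi_i$ is $\GL_2$-invariant; taking closures preserves invariance, so $\wt V$ is $\GL_2$-invariant as required.

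The only step that requires slight care is the identification $\pi_i^{-1}(Z_i)\xrightarrow{\sim} Z_i$, which hinges on the uniqueness of the greatest common factor of exact degree $i$; once this is in place the rest is formal from the definition of an equivariant envelope in \cite[Definition 18.3]{Ful} and \cite[Section 2.6]{EG}. I expect no further obstacles beyond recording these observations cleanly.
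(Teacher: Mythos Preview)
Your proposal is correct and follows essentially the same approach as the paper: both arguments use the stratification $\{Z_i\}$, the uniqueness of the degree-$i$ gcd factorization to produce a unique preimage over $Z_i$, and then take a closure to obtain $\wt V$. The only cosmetic difference is that the paper phrases the key step at the generic point of $V$ over its residue field $K$, whereas you phrase it as $\pi_i^{-1}(Z_i)\to Z_i$ being an isomorphism on points over the algebraically closed base; these are equivalent formulations of the same idea.
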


\begin{dem}
{ We follow the argument   in \cite[Lemma 3.2]{Vis98}.}  Observe that each $\wt{Z}_i$ maps to the closure $\ov{Z}_i$ of a stratum of $\Delta_{r,d}$. The morphism $\wt{\pi}$ is proper and $\GL_2$-invariant. To check the remaining equivariant envelope hypothesis, we can restrict to each $Z_i$. Let $V$ be a $\GL_2$-invariant subvariety of $Z_i$ and $\omega$  the generic point of $V$. The point $\omega$ is represented by an $(r+1)$-tuple $[f]:=[f_1, \dots, f_{r+1}]$ of forms of degree $d$ in $K[x,y]$ for some field extension $k \subset K$. By definition of $Z_i$, the greatest common divisor of the forms in $[f]$ has degree $i$. Therefore, there is a unique $K$-valued point $\wt{\omega}$ in $\wt{Z}_i$ mapping to $\omega$. We define $\wt{V}$ as the schematic closure of $\wt{\omega}$ in $\wt{Z}_i$. Because of the uniqueness of the generic point, $\wt{V}$ is in fact $\GL_2$-invariant.
\end{dem}

\begin{corol} \label{corol:ipi}
We have the identity:
\[
\pi_* \left( A_{\GL_2}^* ( \wt{Z}) \right) = { i_* }\left( A_{\GL_2}^* \left( \Delta_{r,d} \right) \right).
\]
\end{corol}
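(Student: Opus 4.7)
The plan is to apply Theorem \ref{thm:ipi} directly, with $X = \Pro(W_d^{\oplus r+1})$, $G = \GL_2$, and the equivariant closed embedding $i: \Delta_{r,d} \hookrightarrow X$. Proposition \ref{prop.equivariant.envelope} already supplies all the nontrivial content needed, namely the existence of an equivariant envelope $\widetilde{\pi}: \widetilde{Z} \to \Delta_{r,d}$.

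The one thing worth spelling out is that the morphism $\pi: \wt Z \to \Pro(W_d^{\oplus r+1})$ assembled from the components $\pi_i$ in \eqref{eq:defincl} factors as $\pi = i \circ \widetilde{\pi}$: indeed, the image of each $\pi_i$ lies in $\ov{Z}_i \subset \Delta_{r,d}$, so $\pi$ set-theoretically factors through the closed subscheme $\Delta_{r,d}$, and $\widetilde\pi$ is defined precisely as this factorization. By functoriality of proper pushforward on equivariant Chow groups this yields $\pi_* = i_* \circ \widetilde{\pi}_*$.

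Since $\widetilde{\pi}_*: A_{\GL_2}^*(\wt Z) \twoheadrightarrow A_{\GL_2}^*(\Delta_{r,d})$ is surjective, being the pushforward associated to an equivariant envelope (this is the content of Theorem \ref{thm:ipi}, which in turn relies on \cite[Lemma 3]{EG} and \cite[Lemma 18.3(6)]{Ful}), the image of $\pi_*$ coincides with the image of $i_*$, which is what is to be proved. There is no real obstacle here: the substantive geometric work — constructing the envelope and verifying the equivariance of the chosen lifts of invariant subvarieties — was done in Proposition \ref{prop.equivariant.envelope}, so the corollary reduces to a two-line bookkeeping argument.
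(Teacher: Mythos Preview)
Your proof is correct and follows exactly the same approach as the paper: the paper's proof is a single sentence citing Proposition \ref{prop.equivariant.envelope} and Theorem \ref{thm:ipi}, and you have simply unpacked the factorization $\pi = i \circ \wt\pi$ and the surjectivity of $\wt\pi_*$ that make that citation work.
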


\begin{dem}
This is a  consequence of Proposition \ref{prop.equivariant.envelope} and Theorem \ref{thm:ipi}.
\end{dem}

For the next definition, we adopt the notation in the following diagram:
\begin{equation}\label{eq:projsone}
\xymatrix{
& \wt{Z}_i \ar[rr]^{\pi_i} \ar[dl]_{p_1} \ar[dr]^{p_2}& & \Pro(W_{d}^{\oplus r+1})\\
\Pro(W_i) & & \Pro(W_{d-i}^{\oplus r+1})
}
\end{equation}

\begin{defi}
For all $i=1, \dots, d$, {  denote by $h_{i} = p_1^\ast c_1^{eq} (\mathcal{O}_{\Pro(W_i) }(1))$  the (pull-back via the first projection of the) canonical equivariant lift} of the hyperplane class on the first  component of $\wt{Z}_i = \Pro(W_i) \times \Pro \left( W_{d-i}^{\oplus r + 1} \right)$. Also, denote by $H =  c_1^{eq} (\mathcal{O}_{ \Pro(W_d^{\oplus r+1} ) }(1))$ and by $\eta_i =  p_2^\ast c_1^{eq} (\mathcal{O}_{ \Pro \left( W_{d-i}^{\oplus r + 1} \right)}(1))$. Define the classes:
\[
\alpha_{i,k}(H):= \pi_{i_*} \left( h_i^k\right) \in A_{\GL_2}^* \left( \Pro(W_d^{\oplus r+1} ) \right).
\]
\end{defi}

\begin{propos}\label{prop.alpha}
We have the ideal identity:
\[
i_* \left( A_{\GL_2}^* \left( \Delta_{r,d} \right) \right) = \left( \alpha_{i,k}(H)\right)_{i=1, \dots, d; \; k=0, \dots, i}.
\]
\end{propos}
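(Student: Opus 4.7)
The plan is to combine Corollary~\ref{corol:ipi} with the projective bundle structure of the components $\wt Z_i = \Pro(W_i)\times\Pro(W_{d-i}^{\oplus r+1})$, so as to reduce the generation question to a bookkeeping of push-forwards. Corollary~\ref{corol:ipi} gives
\[
i_*\bigl(A^*_{\GL_2}(\Delta_{r,d})\bigr) \;=\; \pi_*\bigl(A^*_{\GL_2}(\wt Z)\bigr) \;=\; \sum_{i=1}^{d}\pi_{i*}\bigl(A^*_{\GL_2}(\wt Z_i)\bigr),
\]
so the inclusion $\supseteq$ is immediate (each $\alpha_{i,k}(H)=\pi_{i*}(h_i^k)$ lies in the image), and it suffices to establish $\subseteq$ one component at a time.

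First I would invoke the projective bundle formula on the two factors of $\wt Z_i$ to obtain
\[
A^*_{\GL_2}(\wt Z_i) \;=\; A^*_{\GL_2}(pt)[h_i,\eta_i]\big/\bigl(P_{0,i}(h_i),\, P_{r,d-i}(\eta_i)\bigr),
\]
a ring spanned as an $A^*_{\GL_2}(pt)$-module by monomials $h_i^k \eta_i^m$ with $k\le i$. Next I would pin down $\pi_i^*(H)$: because polynomial multiplication $W_i \otimes W_{d-i}^{\oplus r+1} \to W_d^{\oplus r+1}$ is a $\GL_2$-equivariant linear map (recall $W_j=\Sym^j E^\vee$), the map $\pi_i$ of \eqref{eq:defincl} sends a pair of lines to their tensor product inside $W_d^{\oplus r+1}$. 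Consequently $\pi_i^*\Ocal(-1)$ is $\GL_2$-equivariantly identified with $p_1^*\Ocal_{\Pro(W_i)}(-1)\otimes p_2^*\Ocal_{\Pro(W_{d-i}^{\oplus r+1})}(-1)$, yielding the identity $\pi_i^*(H) = h_i+\eta_i$ of equivariant classes.

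Setting $\eta_i = \pi_i^*(H) - h_i$ and applying the projection formula then gives
\[
\pi_{i*}(h_i^k\eta_i^m) \;=\; \sum_{j=0}^{m}\binom{m}{j}(-1)^{m-j}\,H^{\,j}\,\alpha_{i,k+m-j}(H),
\]
so every push-forward from $\wt Z_i$ is an $A^*_{\GL_2}(\Pro(W_d^{\oplus r+1}))$-linear combination of classes $\alpha_{i,l}(H)$ for various $l\ge 0$. To cap the index $l$ at $i$, I would use the vanishing $P_{0,i}(h_i)=0$: expanding it expresses $h_i^{i+1}$, and by induction every higher power, as an $A^*_{\GL_2}(pt)$-linear combination of $1,h_i,\dots,h_i^i$. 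Pushing forward, each $\alpha_{i,l}(H)$ with $l>i$ is rewritten as a combination of the $\alpha_{i,k}(H)$ with $0\le k\le i$, trimming the generating set to the asserted range.

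The step I expect to require the most care is the equivariant identification $\pi_i^*(H)=h_i+\eta_i$: while conceptually transparent via the tensor-product description of multiplication, one must verify that the \emph{canonical equivariant lifts} of the three hyperplane classes satisfy this identity without an extra determinantal character twist. Once this is in hand, the remainder is pure module-theoretic bookkeeping over $A^*_{\GL_2}(pt)$.
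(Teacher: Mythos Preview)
Your proposal is correct and follows essentially the same route as the paper: reduce to $\pi_*$ via Corollary~\ref{corol:ipi}, use $\pi_i^*(H)=h_i+\eta_i$ together with the projection formula to express $\pi_{i*}(h_i^k\eta_i^m)$ in terms of the $\alpha_{i,l}(H)$, and then invoke the relation $P_{0,i}(h_i)=0$ to cap the index at $i$. The paper does the middle step by induction on $m$ rather than your explicit binomial expansion, but this is only a cosmetic difference; your concern about an extra character twist in $\pi_i^*(H)$ is unnecessary, as the multiplication map is a genuine $\GL_2$-equivariant linear map $W_i\otimes W_{d-i}\to W_d$ and the canonical equivariant lifts are defined uniformly via $c_1^{eq}(\Ocal(1))$.
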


\begin{dem}
{ Using Corollary \ref{corol:ipi}, it suffices to show that  $\pi_* \left( A_{\GL_2}^* ( \wt{Z}) \right) $ is contained in the ideal $I$ generated by the classes $\alpha_{i,k}(H)$.  From \eqref{eq:defincl},  
\begin{equation} \label{eq:pullb}
\pi_i^\ast(H) = h_i+\eta_i.
\end{equation}

We prove that any class of the form $\pi_{i_\ast} (h_i^{k} \eta_i^{m})$ is in $I$ by induction on $m$. For $m=0$, $k>i$,  notice that the class $\pi_{i_\ast}(h_i^k)$ is a linear combination of the classes $\alpha_{i_k}$,   $k\leq i$, with coefficients in $\Z[l_1, l_2]$; hence the base case $m = 0$ is established.

Using projection formula and \eqref{eq:pullb} one obtains the following equalities.
$$
H \pi_{i_\ast}(h_i^k \eta_i^{m-1})  = \pi_{i_\ast}(h_i^k \eta_i^{m-1} \pi_i^\ast H) = \pi_{i_\ast}(h_i^k \eta_i^{m-1} (h_i+\eta_i))  = \pi_{i_\ast}(h_i^{k+1} \eta_i^{m-1} ) + \pi_{i_\ast}(h_i^{k} \eta_i^{m} ) .
$$
Solving
$$
 \pi_{i_\ast}(h_i^{k} \eta_i^{m}) = H \pi_{i_\ast}(h_i^k \eta_i^{m-1})  - \pi_{i_\ast}(h_i^{k+1} \eta_i^{m-1} )
$$
completes the inductive step.
}
\end{dem}

\begin{lemma}
\label{lemma.big.pol}
For every value of $r,d$, we have $P_{r,d}(H) \in Im(i_\ast)$.
\end{lemma}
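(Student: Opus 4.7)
The plan is to exhibit $P_{r,d}(H)$ as a polynomial multiple of an element of the ideal $(\alpha_{1,0}(H), \alpha_{1,1}(H))$, using only the first envelope component $\wt{Z}_1 \xrightarrow{\pi_1} \Pro(W_d^{\oplus r+1})$. The key observation is that the two extreme factors of $P_{r,d}(H)$, namely $(H+dl_1)^{r+1}$ (from $k=0$) and $(H+dl_2)^{r+1}$ (from $k=d$), are respectively the $T$-equivariant classes of the loci $V_y$ and $V_x$ of $(r+1)$-tuples of polynomials divisible by $y$ or by $x$. Both loci sit inside $\ov{Z}_1 \subset \Delta_{r,d}$ and are cut out transversely by the $r+1$ linear conditions that the $x^d$-coefficient (respectively $y^d$-coefficient) of each polynomial vanishes.

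First I would realize these extreme factors as pushforwards from $\wt{Z}_1$. The $T$-fixed point $[y] \in \Pro(W_1)$ has equivariant class $h_1 + l_1$, and the restriction of $\pi_1$ to the fiber $\{[y]\} \times \Pro(W_{d-1}^{\oplus r+1})$ is a closed immersion onto $V_y$ because the factorization $f_j = y \cdot g_j$ is unique. The projection formula would then give the $T$-equivariant identity
$$
\alpha_{1,1}(H) + l_1\,\alpha_{1,0}(H) \;=\; \pi_{1*}(h_1 + l_1) \;=\; [V_y] \;=\; (H + dl_1)^{r+1},
$$
together with its Weyl-conjugate $\alpha_{1,1}(H) + l_2\,\alpha_{1,0}(H) = (H + dl_2)^{r+1}$. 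Multiplying these two identities in $A^{*}_{\GL_2}(\Pro(W_d^{\oplus r+1}))$ produces the $S_2$-symmetric relation
$$
(H+dl_1)^{r+1}(H+dl_2)^{r+1} \;=\; \alpha_{1,1}(H)^2 - c_1\,\alpha_{1,0}(H)\alpha_{1,1}(H) + c_2\,\alpha_{1,0}(H)^2,
$$
whose right-hand side is manifestly in $(\alpha_{1,0}(H), \alpha_{1,1}(H)) \subset \im(i_*)$.

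To conclude, I would factor $P_{r,d}(H) = (H+dl_1)^{r+1}(H+dl_2)^{r+1}\cdot R$, where $R := \prod_{k=1}^{d-1}(H + (d-k)l_1 + kl_2)^{r+1}$ (an empty product equal to $1$ when $d=1$). The Weyl involution $l_1 \leftrightarrow l_2$ pairs the index $k$ with $d-k$, so $R$ is $S_2$-invariant and hence belongs to $\Z[c_1, c_2, H]$. Because $\im(i_*)$ is an ideal and the first factor lies inside it, the conclusion $P_{r,d}(H) \in \im(i_*)$ follows. The principal technical step is justifying the pushforward identity $\pi_{1*}(h_1 + l_1) = (H + dl_1)^{r+1}$: this reduces to checking that the restriction of $\pi_1$ to the fiber above $[y]$ is a closed immersion with image $V_y$, and that $V_y$ is a transverse complete intersection of $r+1$ equivariant hyperplanes each of class $H + dl_1$, so that $[V_y]$ admits the claimed polynomial representative.
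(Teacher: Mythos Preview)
Your argument is correct and reaches the same conclusion as the paper, but via a genuinely different route. The paper treats $d=1$ separately (appealing to the known Grassmannian relations) and for $d\ge 2$ uses the \emph{second} envelope component $\wt{Z}_2$: it pushes forward the $T$-fixed point $[xy]\in\Pro(W_2)$, observing that $\pi_{2_*}(p_1^*[xy])$ is the class of the linear subspace of $(r+1)$-tuples divisible by $xy$, namely $\bigl((H+dl_1)(H+dl_2)\bigr)^{r+1}$, which visibly divides $P_{r,d}(H)$.

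You instead stay on $\wt{Z}_1$, push forward the two $T$-fixed points $[y],[x]\in\Pro(W_1)$ to obtain the $T$-equivariant identities $\alpha_{1,1}(H)+l_j\alpha_{1,0}(H)=(H+dl_j)^{r+1}$, and then multiply them to symmetrize. Both approaches isolate the same factor $\bigl((H+dl_1)(H+dl_2)\bigr)^{r+1}$ of $P_{r,d}(H)$; the paper gets it in one stroke from $\wt{Z}_2$, while you assemble it from two $\wt{Z}_1$-classes. Your route has two small advantages: it needs no case distinction at $d=1$, and it shows more precisely that $P_{r,d}(H)$ already lies in the sub-ideal $(\alpha_{1,0}(H),\alpha_{1,1}(H))\subseteq\im(i_*)$. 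The paper's route is shorter and avoids the detour through $T$-equivariant (non-$S_2$-symmetric) expressions.
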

\begin{proof}
The case $d=1$ is treated by direct inspection: it is known that  $\alpha_{1,0}, \alpha_{1,1}$ generate the ideal of relations for the integral Chow ring of Grassmannians (\cite[Theorem 5.26]{3264}). 

For $d\geq 2$, we consider the  component $\wt{Z}_2$ of the equivariant envelope and  the point $P = (0:1:0) = [xy]\in \Pro(W_2).$
The class $\pi_{2_\ast}(p_1^\ast([P]))$ is contained in the ideal of relations. The lemma is proved by showing that such class divides $P_{r,d}(H)$.

The map ${\pi_2}_{|p_1^{-1}([P])}$ has degree one onto the coordinate linear subspace of $\Pro(W_d^{\oplus r+1})$ obtained by setting to zero all homogeneous coordinates corresponding to monomials of the form $x^d$ or $y^d$. It follows that
\begin{equation}
  \pi_{2_\ast}(p_1^\ast([P])) = \left((H+dl_1)(H+dl_2)\right)^{r+1},  
\end{equation}
which indeed is a factor of $P_{r,d}(H)$.
\end{proof}

Using Proposition \ref{prop.alpha}, Lemma \ref{lemma.big.pol}  and recalling the notational convention \ref{convsubstitute}, we have obtained from (\ref{isom}) the following presentation for the Chow ring of $\Mcal_{0}(\Pro^r, d)$.

\begin{theorem}\label{thmheresthepres}
For any odd positive integer $d$, and any integer $r\geq 0$ we have
\begin{equation}\label{gen.rel}
A^* \left( \Mcal_{0}(\Pro^r, d) \right) \cong \frac{\Z[c_1,c_2]}{(\alpha_{i,k})_{i=1, \dots, d, k=0, \dots, i}}.\end{equation}
\end{theorem}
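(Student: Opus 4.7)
The plan is to assemble the pieces that have just been established in this section; there is essentially no new content required. The starting point is the isomorphism \eqref{isom}, which already expresses $A^* \left( \Mcal_{0}(\Pro^r, d) \right)$ as a quotient of $\Z[c_1,c_2,H]$ modulo three pieces of ideal: $\im(i_*)$, the linear relation $H-(s+1)c_1$, and the polynomial $P_{r,d}((s+1)c_1)$.

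The first step is to invoke Proposition \ref{prop.alpha} to identify $\im(i_*)$ explicitly as the ideal generated by the classes $\alpha_{i,k}(H)$ for $i = 1, \dots, d$ and $k = 0, \dots, i$. Next, Lemma \ref{lemma.big.pol} shows that $P_{r,d}(H)$ itself lies in $\im(i_*)$, so after the substitution $H = (s+1)c_1$ the generator $P_{r,d}((s+1)c_1)$ is already contained in the ideal generated by the $\alpha_{i,k}((s+1)c_1)$ and is therefore redundant. At this point the presentation of $A^\ast(\Mcal_{0}(\Pro^r,d))$ reads
\begin{equation*}
\frac{\Z[c_1,c_2,H]}{\bigl( \alpha_{i,k}(H),\; H-(s+1)c_1 \bigr)_{i,k}}.
\end{equation*}

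Finally, I would use the linear relation $H - (s+1)c_1 = 0$ to eliminate the variable $H$ by substitution, reducing the ambient ring to $\Z[c_1,c_2]$. By Convention \ref{convsubstitute}, each generator $\alpha_{i,k}(H)$ specializes under $H \mapsto (s+1)c_1 = (d+1)c_1/2$ precisely to the class $\alpha_{i,k}$, yielding the presentation \eqref{gen.rel}. Because every nontrivial ingredient has been distilled into the preceding results, there is no genuine obstacle at this final assembly step; the most delicate point along the way was already handled in the proof of Proposition \ref{prop.alpha}, namely the projection-formula induction showing that mixed classes $\pi_{i_\ast}(h_i^k \eta_i^m)$ reduce to the pure $h_i$-pushforwards, so that the $\alpha_{i,k}(H)$ alone generate $\im(i_*)$.
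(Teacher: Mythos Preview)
Your proposal is correct and follows exactly the approach of the paper: the theorem is stated immediately after the sentence ``Using Proposition \ref{prop.alpha}, Lemma \ref{lemma.big.pol} and recalling the notational convention \ref{convsubstitute}, we have obtained from (\ref{isom}) the following presentation\dots'', and your write-up simply unpacks that sentence. There is no difference in strategy or in the ingredients invoked.
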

{
\begin{rmk}
While for $r=0$ the constructions in this section do not yield a space of stable maps (but rather a space of quasi maps as in \cite{CFKM}), we find convenient to include this case in our study since we will be computing some of the generating relations inductively on $r$, with $r=0$ being the base case.
\end{rmk}}

%%%%%%%%%%

\section{Eliminating redundant relations}
\label{sec:eliminredrel}

%%%%%%%
In this section we reduce the number of generators of the ideal of relations in the quotient ring (\ref{gen.rel}). More precisely, we  prove that the ideal
\[
\left(\pi_* \left( A_{\GL_2}^* ( \wt{Z}) \right)\right)=(\alpha_{i,k})_{i=1, \dots, d, k=0, \dots, i}
\]
is generated by $\alpha_{1,0}, \alpha_{1,1},$ and $\alpha_{i,0}$ for all positive integers $i$ that are powers of a prime number.

The goal is to prove that the image of the push-forward map
\[
\pi_{i*}: A_{\GL_2}^* \left( \Pro(W_i) \times \Pro \left( W_{d-i}^{\oplus r + 1}\right) \right) \to A_{\GL_2}^* \left( \Pro \left( W_{d}^{\oplus r + 1} \right) \right)
\]

is contained in the image of the push-forward maps $\pi_{a*}$ with $a$ less than $i$, whenever $i$ is not the power of a prime number.  Consider the  commutative diagram

\begin{equation}\label{eq:projstwo}
\xymatrix{
{\displaystyle   \bigsqcup_{a=1}^{i-1}} \Pro(W_{a}) \times \Pro(W_{i-a}) \times \Pro(W_{d- i}^{\oplus r+1})  \ar[d]_{\psi_i:=\bigsqcup \pi_a \times \text{id}} \ar[rrr]^{\bigsqcup \text{id} \times \pi_{i-a}}& & & {\displaystyle   \bigsqcup_{a=1}^{i-1}} \Pro(W_{a}) \times \Pro(W_{d- a}^{\oplus r+1}) \ar[d]^{\bigsqcup \pi_a}\\
\Pro(W_i) \times \Pro(W_{d-i}^{\oplus r+1}) \ar[rrr]^{\pi_i} & & & \Pro(W_{d}^{\oplus r+1})
}
\end{equation}
in which the only morphism that has not been already defined is $\psi_i:=\bigsqcup \pi_a \times \text{id}$. In this context, the morphism $\pi_a: \Pro(W_{a}) \times \Pro(W_{i-a}) \to \Pro(W_i)$ is defined as
\[
\pi_{a} ([c(x,y)],[g(x,y)]) = [c(x,y) \cdot g(x,y)].
\]
In other words, on the left side of diagram (\ref{eq:projstwo}), the morphisms $\pi_a$ are considered in the case $r=0$, while in the rest of the diagram, the morphisms $\pi_{k}$ are considered for a fixed value of $r$.

Since the above diagram is commutative, in order to show that the image of the push-forward map $\pi_{i*}$ is contained in the image of the push-forward maps $\pi_{a*}$ with $a$ less than $i$, it is enough to show that the push-forward $\psi_{i*}$ is surjective. Moreover, it suffices to verify that $\psi_{i*}$ is surjective when considering the torus-equivariant intersection groups.
Therefore, we focus on the push-forward map:
\[
\pi_{a*}: A^{*}_{T} \left( \Pro(W_{a}) \times \Pro(W_{i-a}) \right) \to A^{*}_{T} \left( \Pro(W_i) \right)
\]
\begin{rmk}
Arguing as in Proposition \ref{prop.alpha},  the image of $\pi_{a*}$ is generated by the classes $\alpha_{a,k}(h_i):=\pi_{a*}(h_a^k)$ for $k=0, \dots, a$. Since $\pi_{a*}$ is a homomorphism of $A^*_{\GL_2}$-modules, one can  also view the image of $\pi_{a*}$ as generated by the classes
\begin{equation}\label{modified:classes}
\wt{\alpha}_{a,k}:= \pi_{a*}(P_{a,k}(h_a))
\end{equation}
where $\{ P_{a,k}(h_a) \}_{k=0, \dots, a}$ is a suitable family of monic polynomials of degree $k$ that we define as
\[
P_{a,k}(h_a):=\prod_{s=0}^{k-1}(h_a + (a-s)l_1 + s l_2).
\]
In other words, $P_{a,k}(h_a)$ is the equivariant Chow class of the locus of polynomials in $W_a$ that are divisible by $y^k$. Note that  $P_{a,0}(h_a)=1$. In the following Lemma we determine the classes $\wt{\alpha}_{a,k}$.

\end{rmk}
\begin{lemma}\label{lemma:coefficient-alpha-tilde}
Let $\wt{\alpha}_{a,k}$ be the class in $A^T_{*}(\Pro(W_i))$ defined in (\ref{modified:classes}). Then we have the following identity:
\[
\wt{\alpha}_{a,k}=\binom{i-k}{a-k} P_{i,k}(h_i) \]
%\end{enumerate}

\begin{dem}
Let $L_{a,k} \subset \Pro(W_a) \times \Pro(W_{i-a})$ be the locus of pairs of polynomials where the first one is a multiple of $y^k$. The locus $L_{a,k}$ is invariant for the action of the torus $T$ and its equivariant class is $P_{a,k}(h_a)$. The restriction of the map
\[
\pi_a: \Pro(W_a) \times \Pro(W_{i-a})
\]
to $L_{a,k}$ is finite of generic degree $\displaystyle \binom{i-k}{a-k}$ onto its image. This is due to the following facts:
\begin{itemize}
\item the image of $L_{a,k}$ is the locus of polynomials of degree $i$ which are divisible by $y^{k}$;
\item let $R(x,y)=y^k S(x,y)$ be a generic homogeneous polynomial of degree $i$ which is divisible by $y^k$.
Since we are in an algebraically closed field, and assuming by generality that $S(x,y)$  has $i-k$ distinct roots, there are exactly $\displaystyle \binom{i-k}{a-k}$ ways to choose a factor $F(x,y)$ of $S(x,y)$ of degree $a-k$ so that $[y^kF(x,y), S(x,y)/F(x,y)]$  in $\Pro(W_a) \times \Pro(W_{i-a})$ maps to $R(x,y)$.
\end{itemize}
To conclude the proof, we notice that $\left[L_{a,k} \right]=P_{a,k}(h_a)$ and $\left[\pi_{a}(L_{a,k}) \right]=P_{i,k}(h_i)$, therefore we have
\[
\wt{\alpha}_{a,k}=\binom{i-k}{a-k} P_{i,k}(h_i). \]
\end{dem}

\end{lemma}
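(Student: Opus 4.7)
The plan is to realize both sides of the identity as (multiples of) equivariant fundamental classes of explicit $T$-invariant subvarieties, and then to compute the generic degree of a finite map between them.

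First, I would introduce the invariant subvariety
\[
L_{a,k} := \{([c],[g]) \in \Pro(W_a)\times \Pro(W_{i-a}) : y^k \mid c\}
\]
and observe that it is cut out transversally by the vanishing of the coefficients of $x^{a-s}y^s$ in $c$ for $s=0,\ldots,k-1$. Since the coordinate dual to the monomial $x^{a-s}y^s$ contributes a factor of weight $(a-s)l_1+sl_2$ when realized as a section of $\Ocal_{\Pro(W_a)}(1)$ twisted by the appropriate character, the equivariant class of its zero locus is $h_a+(a-s)l_1+sl_2$; multiplying these contributions gives $[L_{a,k}] = P_{a,k}(h_a)$. The identical weight computation in $\Pro(W_i)$ identifies the class of
\[
M_{i,k} := \{[R] \in \Pro(W_i) : y^k \mid R\}
\]
with $P_{i,k}(h_i)$.

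Next I would check that $\pi_a(L_{a,k}) = M_{i,k}$, which is immediate from the formula $\pi_a([c],[g]) = [c\cdot g]$. To compute the generic degree of the restriction $\pi_a|_{L_{a,k}}$, pick a general $[R] = [y^k S] \in M_{i,k}$, where $S \in W_{i-k}$ has $i-k$ pairwise distinct linear factors; this splitting of $S$ is precisely what requires the base field to be algebraically closed. A preimage of $[R]$ corresponds to choosing $a-k$ of these linear factors to form the polynomial $F$ in a pair of the form $([y^k F],[S/F])$, the remaining $i-a$ factors being absorbed into the second component. This yields exactly $\binom{i-k}{a-k}$ reduced preimages, so $\pi_a|_{L_{a,k}}$ is generically étale of that degree.

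The lemma then follows from the standard proper push-forward identity
\[
\wt{\alpha}_{a,k} = \pi_{a*}[L_{a,k}] = \deg\bigl(\pi_a|_{L_{a,k}}\bigr)\cdot [M_{i,k}] = \binom{i-k}{a-k}\, P_{i,k}(h_i).
\]
The main subtlety I anticipate is justifying the degree count rigorously: one must verify that the fiber of $\pi_a|_{L_{a,k}}$ over a sufficiently general $[R]$ is reduced of the expected cardinality, and that the combinatorial bijection with subsets of size $a-k$ is available at the level of schemes — both of which rely on the algebraic closure of the base field, as emphasized in the introduction. Once the degree is in hand, the rest of the argument is a formal consequence of projection and the transversality of the defining sections of $L_{a,k}$ and $M_{i,k}$.
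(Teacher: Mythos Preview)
Your proposal is correct and follows essentially the same approach as the paper: define the $T$-invariant locus $L_{a,k}$ of pairs with first factor divisible by $y^k$, identify its image under $\pi_a$ as the locus $M_{i,k}$ of polynomials divisible by $y^k$, and compute the generic degree $\binom{i-k}{a-k}$ by counting factorizations over an algebraically closed field. Your write-up is in fact slightly more explicit than the paper's in spelling out the weight computation that gives $[L_{a,k}]=P_{a,k}(h_a)$ and $[M_{i,k}]=P_{i,k}(h_i)$, but the underlying argument is the same.
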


\begin{theorem}\label{ideal.of.relations}
The ideal of relations $(\alpha_{i,k})_{i=1, \dots, d, k=0, \dots, i}$ is generated by the classes $\alpha_{1,0}$, $\alpha_{1,1}$ from $\wt{Z}_1$ and the pushforwards $\alpha_{i,0}$ of the fundamental classes of $\wt{Z}_i$ where $i$ is the power of a prime number.
\end{theorem}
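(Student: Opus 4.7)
The plan is to induct on $i$: for each $i \geq 2$ I will show that $\alpha_{i,k}$ with $k \geq 1$ (always), and also $\alpha_{i,0}$ when $i$ is not a prime power, already lie in the ideal generated by $\{\alpha_{a, k'}\}_{a < i}$. Iterating downward from $i = d$ then leaves exactly the generators listed in the statement.

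The key mechanism is the commutative diagram \eqref{eq:projstwo}. Since $\pi_i \circ \psi_i$ factors through the pushforwards $\pi_{a*}$ with $a < i$, surjectivity of $\psi_{i *}$ would force the image of $\pi_{i*}$ to be contained in $\sum_{a<i}\pi_{a*}(A^*_T(\wt Z_a))$; the projection formula makes these images ideals of the ambient equivariant Chow ring, so a submodule containment automatically upgrades to an ideal containment. By Künneth, $A^*_T\bigl(\Pro(W_i) \times \Pro(W_{d-i}^{\oplus r+1})\bigr)$ splits as a tensor product, and $\psi_i$ is the identity on the $\Pro(W_{d-i}^{\oplus r+1})$-factor; thus surjectivity of $\psi_{i*}$ reduces to the $r = 0$ assertion that $\bigsqcup_{a=1}^{i-1}\pi_{a*}$ covers $A^*_T(\Pro(W_i))$.

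Since $A^*_T(\Pro(W_i))$ is freely generated as an $A^*_T$-module by $\{P_{i,k}(h_i)\}_{k=0}^{i}$, it suffices to realize each $P_{i,k}(h_i)$ in the image. Writing $\ell$ for the hyperplane class of the second factor $\Pro(W_{i-a})$, a direct generalization of Lemma \ref{lemma:coefficient-alpha-tilde} --- enumerating the factorizations of a generic polynomial of degree $i$ divisible by $y^{j+m}$ into factors of degrees $a$ and $i - a$ divisible by $y^j$ and $y^m$ respectively --- yields $\pi_{a *}\bigl(P_{a,j}(h_a)\, P_{i-a,m}(\ell)\bigr) = \binom{i - j - m}{a - j}\, P_{i, j + m}(h_i)$. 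For $1 \leq k \leq i - 1$, the choice $j = k,\, m = 0,\, a = k$ gives coefficient $\binom{i-k}{0} = 1$; for $k = i$, the choice $j = a,\, m = i-a$ gives $\binom{0}{0} = 1$. Hence $P_{i, k}(h_i)$ lies in the image for every $k \geq 1$, irrespective of $i$, which eliminates all relations $\alpha_{i,k}$ with $k \geq 1$.

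The only class that remains is $P_{i,0}(h_i) = 1$, attainable only via $j = m = 0$ with available coefficients $\binom{i}{a}$ for $a = 1, \ldots, i-1$. At this final step the arithmetic of $i$ enters through the classical identity that $\gcd\bigl(\binom{i}{1}, \ldots, \binom{i}{i-1}\bigr)$ equals $p$ when $i = p^n$ is a prime power and $1$ otherwise. Hence $1$ is in the image precisely when $i$ is not a prime power, which is exactly the condition for $\alpha_{i, 0}$ to be redundant. The main anticipated obstacle is isolating the correct combinatorial reduction from $\psi_{i*}$-surjectivity to the $r=0$ statement and carefully extending Lemma \ref{lemma:coefficient-alpha-tilde} to products of classes from both factors of $\wt Z_a$; after those, only the classical gcd fact is required.
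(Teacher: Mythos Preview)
Your argument is correct and follows the same overall architecture as the paper: reduce via diagram \eqref{eq:projstwo} to showing that the image of $\psi_{i*}$ hits $A^*_T(\Pro(W_i))$ in the appropriate degrees, then compute pushforwards using the loci of polynomials divisible by a power of $y$ and invoke the classical gcd of middle binomial coefficients for $k=0$.

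The one noteworthy difference is in how you handle $k\geq 1$. The paper uses only classes pulled back from the \emph{first} factor, pushing forward $P_{a,j}(h_a)$ and then multiplying by a suitable power of $h_i$ in the target to obtain a \emph{monic} polynomial of degree $k$ (e.g.\ $h_i\cdot\wt\alpha_{k-1,k-1}$), arguing by triangularity. You instead push forward products $P_{a,j}(h_a)\,P_{i-a,m}(\ell)$ coming from \emph{both} factors, which lands directly on the basis element $P_{i,k}(h_i)$ with coefficient $\binom{i-j-m}{a-j}=1$ for your choices. Your two-factor formula is a mild but genuine extension of Lemma~\ref{lemma:coefficient-alpha-tilde}; it buys you a cleaner endgame (hit the basis exactly, no triangular argument), at the cost of proving a slightly broader pushforward identity. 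Either route works, and the $k=0$ step is identical in both.
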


\begin{dem}
Assume that $i>1$ and  fix a positive integer $1\leq k\leq i$. We want to show that there exists a monic polynomial in $h_i$ of degree $k$ that is in the image of $\psi_*$. For every integer $a<i$ and integer $j \leq \min(a,k)$, we have that $h_i^{k-j}\wt{\alpha}_{a,j}$ is a polynomial in $h_j$ with leading coefficient equal to $\displaystyle \binom{i-j}{a-j}$. If $k\geq 2$ we can choose $a=j=k-1$, so that the polynomial $\wt{\alpha}_{k-1,k-1}$ is monic in $h_j$. If $k=1$ then $k<i$ and $\wt{\alpha}_{1,1}$ is monic.

For the case $k=0$, the only homogeneous monic polynomial in $h_i$ of degree zero  is the fundamental class 1. We know that the greatest common divisor of the binomials $\displaystyle \binom{i}{a}$ for $a$ going from $1$ to $i-1$ is in the image of $\psi_*$. It is a consequence of Lucas's theorem \cite{Lucas} that such GCD is 1 when $i$ is not the power of a prime number (and the prime number otherwise), and this concludes the proof.
\end{dem}

%%%%%%%%%%%%%%%
\section{Relations from $Z_1$}
\label{sec:relfromz1}
%%%%%%%%%%%%%%%%

In this section we compute the relations coming from the first envelope $Z_1$. We exhibit generating functions, whose coefficients are functions of the degree $d$, encoding the classes $\alpha_{1,i}^r$ for all values of $r$. 

\begin{theorem}
\label{thm:genfunctionsforfirstenveloperelations}
For $k= 0,1$, consider the $A^* \left( \Mcal_{0}(\Pro^r, d) \right)$ valued generating functions: 
$$
\mathcal{A}_{1,k}(c_1, c_2, d)  = \sum_{r=0}^{\infty} \pi_{1, \ast} [h_1^k] = \sum_{r=0}^{\infty} \alpha^{r}_{1,k},
$$
encoding the push-forwards of the fundamental class, and equivariant hyperplane class from the first component $Z_1$ of the envelope. We have:
\begin{equation}\label{eq:gf0}
  \mathcal{A}_{1,0}(c_1, c_2, d)  = \frac{d}{(1+\frac{d-1}{2}c_1)(1-\frac{d+1}{2}c_1)+d^2c_2} 
\end{equation}

\begin{equation}\label{eq:gf1}
 \mathcal{A}_{1,1}(c_1, c_2, d)  = \frac{1+\frac{d-1}{2}c_1}{(1+\frac{d-1}{2}c_1)(1-\frac{d+1}{2}c_1)+d^2c_2}-1.   
\end{equation}
\end{theorem}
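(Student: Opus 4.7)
The plan is to establish a linear recursion in $r$ relating $(\alpha^{r+1}_{1,0}, \alpha^{r+1}_{1,1})$ to $(\alpha^{r}_{1,0}, \alpha^{r}_{1,1})$, sum it into a $2\times 2$ linear system of equations for the generating functions $\mathcal A_{1,0}, \mathcal A_{1,1}$, and solve. The recursion will come from the compatibility of the envelope map $\pi_1$ with the equivariant linear embedding $\Pro^r \hookrightarrow \Pro^{r+1}$.

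Consider the $\GL_2$-equivariant closed embedding $\iota\colon \Pro(W_d^{\oplus r+1}) \hookrightarrow \Pro(W_d^{\oplus r+2})$ as the locus where the last polynomial vanishes, together with its lift $\tilde\iota\colon \wt Z_1^r \hookrightarrow \wt Z_1^{r+1}$ defined analogously on the second factor. Both are projectivized subrepresentations of $\GL_2$-representations, so the Euler class of the normal bundle computes $[\iota] = P_{0,d}(H)$ (codim $d+1$) and $[\tilde\iota] = P_{0,d-1}(\eta_1)$ (codim $d$). Since $\iota \circ \pi_1^r = \pi_1^{r+1} \circ \tilde\iota$ and $\alpha^r_{1,k}(H)$ admits a canonical polynomial lift to the larger ring, applying the projection formula to $\iota_* \circ \pi^r_{1,*} = \pi^{r+1}_{1,*} \circ \tilde\iota_*$ yields the master recursion
\begin{equation*}
P_{0,d}(H) \cdot \alpha^r_{1,k}(H) \;=\; \pi^{r+1}_{1,*}\bigl(h_1^k \cdot P_{0,d-1}(\eta_1)\bigr).
\end{equation*}

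Using $\eta_1 = \pi_1^*H - h_1$ and reducing modulo the relation $h_1^2 = c_1 h_1 - c_2$ coming from $\Pro(W_1) = \Pro(E^\vee)$, I would write $P_{0,d-1}(\eta_1) = A(H) + B(H) h_1$. Evaluating at the two roots $h_1 = -l_1, -l_2$ and exploiting the telescoping identities $P_{0,d-1}(H+l_1) = P_{0,d}(H)/(H+dl_2)$ and $P_{0,d-1}(H+l_2) = P_{0,d}(H)/(H+dl_1)$ gives the closed form
\begin{equation*}
A = \frac{H\cdot P_{0,d}(H)}{Q(H)}, \qquad B = -\frac{d\cdot P_{0,d}(H)}{Q(H)}, \qquad Q(H) := H^2 - dc_1 H + d^2c_2.
\end{equation*}
The master recursion then becomes a $2 \times 2$ linear system expressing $(\alpha^{r+1}_{1,0}, \alpha^{r+1}_{1,1})$ in terms of $(\alpha^r_{1,0}, \alpha^r_{1,1})$ via the projection formula. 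The base cases at $r=0$ are straightforward: the generically $d$-to-$1$ surjection $\pi_1^0\colon \Pro(W_1) \times \Pro(W_{d-1}) \to \Pro(W_d)$ gives $\alpha^0_{1,0}(H) = d$, while a brief equivariant localization at the torus-fixed points of $\Pro(W_d)$ identifies $\alpha^0_{1,1}(H) = H$.

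After substituting $H = (d+1)c_1/2$ and summing the recursion over $r \geq 0$, the common factor $P_d/Q$ cancels from both sides; the base-case contribution makes the first right-hand side vanish identically (since $-Hd + d(d+1)c_1/2 = 0$ under the substitution), leaving the clean system
\begin{equation*}
(Q-H)\,\mathcal A_{1,0} + d\,\mathcal A_{1,1} = 0, \qquad -dc_2\,\mathcal A_{1,0} + \bigl(Q + \tfrac{d-1}{2}c_1\bigr)\,\mathcal A_{1,1} = -Q.
\end{equation*}
The main obstacle is the final algebraic bookkeeping: one must verify that, using the identity $-d^2c_2 + \tfrac{(d^2-1)c_1^2}{4} = -Q$ after substitution, the determinant of the system factors as $-Q(1 - c_1 + Q)$, so that $Q$ cancels (legitimately in the integral domain $\Z[[c_1,c_2]]$) and the denominator becomes $D := 1 - c_1 + Q$. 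A direct expansion identifies $D = \bigl(1 + \tfrac{d-1}{2}c_1\bigr)\bigl(1 - \tfrac{d+1}{2}c_1\bigr) + d^2 c_2$; the first equation then yields $\mathcal A_{1,0} = d/D$, and $\mathcal A_{1,1} = (H - Q)\mathcal A_{1,0}/d$ gives $\mathcal A_{1,1} = \bigl(1 + \tfrac{d-1}{2}c_1\bigr)/D - 1$, matching the theorem.
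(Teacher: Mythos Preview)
Your proof is correct and follows the same overall architecture as the paper: both set up the recursion via the commutative square relating $\pi_1^r$ and $\pi_1^{r+1}$ through the coordinate inclusion $\Pro(W_d^{\oplus r+1}) \hookrightarrow \Pro(W_d^{\oplus r+2})$, arrive at the same $2\times 2$ linear system after substituting $H=\frac{d+1}{2}c_1$, and solve with the same initial conditions $\alpha^0_{1,0}=d$, $\alpha^0_{1,1}=H$. The one substantive difference is how the coefficients of the system are obtained. The paper writes $h_1^kP_{0,d-1}(H-h_1)\equiv R_{1,k,0}+R_{1,k,1}h_1$ and proves the explicit formulas for the $R_{1,k,j}$ (its Claim~5.2) by induction on odd $d$. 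You instead evaluate at the Chern roots $h_1=-l_1,-l_2$ and use the telescoping identities $P_{0,d-1}(H+l_1)=P_{0,d}(H)/(H+dl_2)$, $P_{0,d-1}(H+l_2)=P_{0,d}(H)/(H+dl_1)$ to read off $A=H\,P_{0,d}(H)/Q(H)$ and $B=-d\,P_{0,d}(H)/Q(H)$ directly. This is cleaner: it bypasses the induction entirely and makes the common factor $P_{0,d}/Q$ (which becomes the paper's $P_{d-2}$ after substitution) appear for free. One minor slip: your determinant is $+Q(1-c_1+Q)$, not $-Q(1-c_1+Q)$; the sign error is harmless since Cramer's rule still yields $\mathcal A_{1,0}=d/D$ and $\mathcal A_{1,1}=(H-Q)/D$ as you state.
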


We begin by describing the strategy of proof of Theorem \ref{thm:genfunctionsforfirstenveloperelations}.
The starting point  is the following commutative diagram, comparing the maps $\pi_1$ from \eqref{eq:projsone} for target dimensions $r$ and $r+1$:
\begin{equation}\label{eq:commdiagdimrr+1}
\xymatrix{
\Pro(W_1)\times \Pro(W_{d-1}^{\oplus r+1}) \ar[rr]^{\pi_1} \ar[d]_{\text{id}\times i_r } & & \Pro(W_{d}^{\oplus r+1}) \ar[d]^{i_r }\\
\Pro(W_1)\times \Pro(W_{d-1}^{\oplus r+2}) \ar[rr]^{\pi_1} & & \Pro(W_{d}^{\oplus r+2}) },
\end{equation}
where the maps denoted $i_r$ are the linear inclusions into the subspace where the  homogeneous coordinates corresponding to the $(r+2)$-th direct summand are equal to zero.
The equality of the push-forwards along the two different compositions in \eqref{eq:commdiagdimrr+1} gives rise to recursive relations that determine the classes $\alpha_{1,k}^{r+1}$ from the classes $\alpha_{1,k}^{r}$.
Appropriately organizing the recursive relations gives rise to a system of functional equations. The generating functions $\mathcal{A}_{1,k}$ from \eqref{eq:gf0} and \eqref{eq:gf1} are then shown to satify the system of functional equations and to have the correct initial conditions, corresponding to the case $r=0$ (the target is a point).

\begin{proof}
We first establish some  conventions meant to simplify notation: the two  horizontal arrows of  diagram \eqref{eq:commdiagdimrr+1} are both denoted by $\pi_1$, and we will let the context determine which map any given equation is referring to. Similarly, the equivariant hyperplane classes for the three projective spaces in both rows of \eqref{eq:commdiagdimrr+1} are denoted $h_1,\eta_1$ and $H$ (as in Section \ref{sec:genrel}); each class in the top row is the pull-back of the corresponding class in the bottom row. So, for instance, any polynomial in $H, c_1, c_2$ denotes both a class for $\Pro(W_{d}^{\oplus r+2})$ and one for $\Pro(W_{d}^{\oplus r+1})$, with the latter being the pullback via $i_r$ of the former. 

The commutativity of the diagram \eqref{eq:commdiagdimrr+1} implies that for $k=0,1$:
\begin{equation}\label{eq:pushpush}
    i_{r\ast}(\pi_{1_{\ast}}(h_1^k)) =\pi_{1_{\ast}}((\text{id}\times i_r)_\ast(h_1^k)).
\end{equation}

Starting with the left hand side of \eqref{eq:pushpush}, $\pi_{1_{\ast}}(h_1^k) = \alpha^{r}_{1,k}(H)$ by definition, and by the projection formula one has
\begin{equation}\label{eq:LHS}
  i_{r_{\ast}}(\alpha^{r}_{1,k}(H)) =\alpha^{r}_{1,k}(H) P_d(H),
  \end{equation}
where $P_d(H) := P_{0,d}(H)$ from \eqref{eq:Prd}.

For the right hand side of \eqref{eq:commdiagdimrr+1}, we  apply the projection formula to obtain
\begin{equation}
 \pi_{1_{\ast}}((\text{id}\times i_r)_\ast(h_1^k)) = h_1^k P_{d-1}(\eta_1).   
\end{equation}
Using that $H = h_1+\eta_1$ (where we omit $\pi_1^\ast$ from the notation for the pullback of the class $H$), and the presentation of the equivariant Chow ring of $\Pro(W_1)$, we may write
\begin{equation}\label{eq:remainder}
  h_1^k P_{d-1}(\eta_1) = R_{1,k,0}^{r,d}(H,c_1, c_2)+ R_{1,k,1}^{r,d}(H,c_1,c_2)h_1,   
\end{equation}
where the right hand side of \eqref{eq:remainder} is the remainder of the division of $P_{d-1}(H-h_1)$ by the polynomial
$P_{1}(h_1)=(h_1^2- c_1h_1 +c_2)$.

Pushing forward and using the projection formula, we then obtain:
\begin{equation}
 \pi_{1_{\ast}}((\text{id}\times i_r)_\ast(h_1^k)) = R_{1,k,0}^{r,d}(H,c_1, c_2)\alpha^{r+1}_{1,0}(H)+ R_{1,k,1}^{r,d}(H,c_1,c_2)\alpha^{r+1}_{1,1}(H).   
\end{equation}

From equations \eqref{eq:commdiagdimrr+1},\eqref{eq:LHS} and \eqref{eq:remainder}, by substituting $H= \frac{d+1}{2} c_1$ one obtains the square size $2$ linear system
\begin{equation}\label{eq:linsys}
 P_d \alpha_{1,k}^{r} = R_{1,k,0}^{r,d}\alpha^{r+1}_{1,0}+ R_{1,k,1}^{r,d}\alpha^{r+1}_{1,1},\end{equation}
where $k = 0,1$.

We observe that all coefficients in  \eqref{eq:linsys} are divisible by $P_{d-2}:=P_{d-2}\left(\frac{d-1}{2}c_1\right).$

\begin{claim}\label{claim:identities}

We have the following identities:
\begin{eqnarray}
\label{eq:PdQd}P_d &=& \left(
d^2 c_2-\frac{d^2 -1}{4} c_1^2 
\right)P_{d-2}\\
\label{eq:R100}R_{1,0,0}^{r,d} &=&  \left(
\frac{d+1}{2} c_1
\right) P_{d-2}\\
\label{eq:R101}R_{1,0,1}^{r,d} &=&  
-d P_{d-2}\\
\label{eq:R110}R_{1,1,0}^{r,d} &=&  dc_2P_{d-2}\\
\label{eq:R111}R_{1,1,1}^{r,d} &=&  \left(
\frac{1-d}{2}c_1
\right)P_{d-2}
\end{eqnarray}
\end{claim}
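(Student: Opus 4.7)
The plan is to reduce every identity in the claim to a direct computation in the polynomial ring $\Z[l_1, l_2]$ after performing the substitutions $c_1 = -l_1 - l_2$, $c_2 = l_1 l_2$, and $H = (d+1)c_1/2$. Writing $d = 2s+1$, the substitution $H = (d+1)c_1/2$ becomes $H = -(s+1)(l_1+l_2)$.

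For the identity \eqref{eq:PdQd}, I would factor
\[
P_d = \prod_{k=0}^{2s+1}\bigl((s-k)\,l_1 + (k-s-1)\,l_2\bigr),
\]
observing that after reindexing $j = k-1$, the factors for $k = 1, \dots, 2s$ are precisely $\prod_{j=0}^{2s-1}\bigl((s-1-j)l_1 + (j-s)l_2\bigr) = P_{d-2}$, with the exact same substitution applied. The leftover $k=0$ and $k=2s+1$ factors multiply to
\[
\bigl(sl_1 - (s+1)l_2\bigr)\bigl(-(s+1)l_1 + sl_2\bigr) = -s(s+1)(l_1+l_2)^2 + (2s+1)^2 l_1 l_2 = d^2 c_2 - \tfrac{d^2-1}{4}c_1^2,
\]
which yields \eqref{eq:PdQd}.

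For the remaining identities, I will compute $R_{1,k,0}^{r,d}, R_{1,k,1}^{r,d}$ by evaluation. Since $P_1(h_1) = (h_1+l_1)(h_1+l_2)$, the congruence $h_1^k P_{d-1}(H - h_1) \equiv R_{1,k,0}^{r,d} + R_{1,k,1}^{r,d}\,h_1 \pmod{P_1(h_1)}$ can be evaluated at $h_1 = -l_1$ and $h_1 = -l_2$, giving the $2\times 2$ linear system
\[
R_{1,k,0}^{r,d} - R_{1,k,1}^{r,d}\,l_i = (-l_i)^k P_{d-1}(H + l_i), \quad i = 1, 2.
\]
The key computational input here is the same kind of reindexing as for $P_d$: with $H = -(s+1)(l_1+l_2)$, I will show
\[
P_{d-1}(H + l_1) = \bigl(s\,l_1 - (s+1)\,l_2\bigr)\,P_{d-2}, \qquad P_{d-1}(H + l_2) = \bigl(-(s+1)\,l_1 + s\,l_2\bigr)\,P_{d-2},
\]
by exhibiting the missing $P_{d-1}$ factors as exactly the $k=0$, respectively $k = 2s+1$, factor of $P_d$.

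Substituting these into the linear system gives, in each case $k=0$ and $k=1$, a system whose coefficient matrix is $\bigl(\begin{smallmatrix} 1 & -l_1 \\ 1 & -l_2\end{smallmatrix}\bigr)$ with determinant $l_1-l_2$, and whose right-hand sides are proportional to $P_{d-2}$. A one-line Cramer computation produces
\[
R_{1,0,0}^{r,d} = -(s+1)(l_1+l_2)\,P_{d-2}, \qquad R_{1,0,1}^{r,d} = -(2s+1)\,P_{d-2},
\]
\[
R_{1,1,0}^{r,d} = (2s+1)\,l_1 l_2\,P_{d-2}, \qquad R_{1,1,1}^{r,d} = s(l_1+l_2)\,P_{d-2},
\]
which upon translating back via $c_1 = -l_1-l_2$, $c_2 = l_1 l_2$, $d = 2s+1$ yield \eqref{eq:R100}--\eqref{eq:R111}. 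The only potential obstacle is bookkeeping of signs and indices across the three polynomials $P_d$, $P_{d-1}$, $P_{d-2}$ after the substitutions, but every factorization above is transparent once the substitution $d = 2s+1$ is made.
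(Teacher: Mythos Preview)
Your proof is correct. For identity \eqref{eq:PdQd} you argue exactly as the paper does, by matching the inner factors of $P_d$ with those of $P_{d-2}$ after the substitution.

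For the four $R$-identities, however, your route is genuinely different from the paper's. The paper proves \eqref{eq:R100}--\eqref{eq:R111} by induction on odd $d$: it writes $P_{d+1}\left(\frac{d+3}{2}c_1-h_1\right)$ as a quadratic in $h_1$ times $P_{d-1}\left(\frac{d+1}{2}c_1-h_1\right)$, applies the inductive hypothesis to the latter, and reduces modulo $h_1^2-c_1h_1+c_2$; the $k=1$ case is then deduced from the $k=0$ case by multiplying by $h_1$. Your argument is instead a direct Lagrange interpolation: since $P_1(h_1)=(h_1+l_1)(h_1+l_2)$, the remainder $R_{1,k,0}+R_{1,k,1}h_1$ is determined by evaluation at $h_1=-l_1,-l_2$, and the two evaluations factor nicely against $P_{d-2}$ via the same reindexing that proved \eqref{eq:PdQd}. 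This avoids induction entirely and makes the appearance of $P_{d-2}$ transparent. The only tacit step is that you divide by $l_1-l_2$, so strictly speaking you are establishing the identities in $\Z[l_1,l_2][(l_1-l_2)^{-1}]$; since both sides are polynomials this suffices. The paper's inductive approach stays inside $\Z[c_1,c_2,h_1]$ throughout, at the cost of a longer verification.
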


Using the results from Claim \ref{claim:identities}, proved at the end of the section, the linear system \eqref{eq:linsys} simplifies to
\begin{equation}\label{eq:linsysmatform}
\left(
d^2 c_2-\frac{d^2 -1}{4} c_1^2 
\right)\left[
\begin{array}{c}
\alpha^{r}_{1,0}\\
\alpha^{r}_{1,1}
\end{array}
\right] = 
\left[
\begin{array}{cc}

\frac{d+1}{2} c_1
 & -d\\
 d c_2 & 
\frac{1-d}{2}c_1
 
\end{array}
\right]
\left[
\begin{array}{c}
\alpha^{r+1}_{1,0}\\
\alpha^{r+1}_{1,1}
\end{array}
\right]
\end{equation}
The linear system \eqref{eq:linsysmatform} may be solved for the $\alpha_{1,k}^{r+1}$'s to obtain:
\begin{eqnarray}\label{eq:linsyssolut}
\alpha^{r+1}_{1,0}
&=&
\frac{1-d}{2}c_1\alpha^{r}_{1,0}+
d \alpha^{r}_{1,1}
\nonumber
\\
\alpha^{r+1}_{1,1}
&=&
-d c_2\alpha^{r}_{1,0}+
\frac{d+1}{2}c_1\alpha^{r}_{1,1}.
\end{eqnarray}

Together with the initial conditions
\begin{equation}\label{eq:initcondit}
 \alpha_{1,0}^{0} = d, \ \ \   \alpha_{1,1}^{0} = \frac{d+1}{2}c_1,
\end{equation}
the equations in \eqref{eq:linsyssolut} for all $r\geq 0$ determine recursively 
all values of $\alpha_{1,k}^{r}$.
To conclude the proof, it suffices to observe that \eqref{eq:linsyssolut}
and \eqref{eq:initcondit} give rise to the system of functional equations:
\begin{equation}
\label{eq:funsyssolut}    
\left\{
\begin{array}{ccc}
\mathcal{A}_{1,0}-d
&=&
\frac{1-d}{2}c_1\mathcal{A}_{1,0}+
d \mathcal{A}_{1,1}
\\
\mathcal{A}_{1,1} - \frac{d+1}{2}c_1
&=&
-d c_2\mathcal{A}_{1,0}+
\frac{d+1}{2}c_1\mathcal{A}_{1,1},
\end{array}
\right.
\end{equation}
together with the boundary conditions
\begin{equation}\label{eq:boundaryconditions}
    \mathcal{A}_{1,0}(0,0,d)  = d, \ \ \ \ \mathcal{A}_{1,1}(0,0,d)  = 0.
\end{equation}
It is then immediate to verify that \eqref{eq:funsyssolut},\eqref{eq:boundaryconditions} are satisfied by the generating functions \eqref{eq:gf0},\eqref{eq:gf1}. Thus Theorem \ref{thm:genfunctionsforfirstenveloperelations} is proved (as soon as the identities from Claim \ref{claim:identities} are established).
\end{proof}

\begin{proof}[Proof of Claim \ref{claim:identities}]
Equation \eqref{eq:PdQd} follows from the definition of the polynomial $P_d$ in terms of the weights $l_1$ and $l_2$, as defined in \eqref{eq:Prd}; substituting $H = \frac{d+1}{2}c_1$ in the polynomial $P_{0,d}(H)$ and  $H = \frac{d-1}{2}c_1$ in the polynomial $P_{0,d-2}(H)$ it is immediate to see that the factors of $P_{d-2}$ are equal to the internal factors of $P_d$. It follows that the quotient $P_d/P_{d-2}$ consists of the product of the first and last terms of $P_d$:
\begin{equation}\label{eq:quotientPdQd}
    \frac{P_d}{P_{d-2}} = \left(\frac{d+1}{2}c_1+dl_1\right)\left(\frac{d+1}{2}c_1+dl_2\right).
\end{equation}
Equation \eqref{eq:PdQd} follows from \eqref{eq:quotientPdQd} by expanding and substituting $-l_1-l_2 = c_1, l_1l_2 = c_2$.

Equations \eqref{eq:R100},\eqref{eq:R101},\eqref{eq:R110} and \eqref{eq:R111} are proved by induction on odd values of $d$,
with the base case $d=1$ easily established after noting $P_{-1}=1$.

We  work in the quotient ring $\Z[h_1, c_1,c_2]/ (h_1^2-c_1h+c_2)$, adopting the convention that for any polynomial $q$, its image $[q]$ in the quotient ring is identified with the remainder of division by $h_1^2-c_1h+c_2$.
From the definitions for the $(d+2)$-th case one may observe that
\begin{equation}\label{eq:d-1isalsonested}
    P_{d+1}\left(\frac{d+3}{2}-h_1\right) = 
    \left(\frac{d+3}{2}c_1+(d+1)l_1-h_1\right)\left(\frac{d+3}{2}c_1+(d+1)l_2- h_1\right) P_{d-1}\left(\frac{d+1}{2}-h_1\right).
\end{equation}
By \eqref{eq:remainder}
\begin{equation}
    \left[P_{d+1}\left(\frac{d+3}{2}-h_1\right)
    \right] = 
    \left[
    R^{r,d+2}_{1,0,0}+ R^{r,d+2}_{1,0,1}h_1
    \right],
\end{equation}so after symmetrizing the quadratic factor in \eqref{eq:d-1isalsonested} and replacing the last factor using the inductive hypothesis, one has
\begin{equation}
 \left[
    R^{r,d+2}_{1,0,0}+ R^{r,d+2}_{1,0,1}h_1
    \right] = 
    \left[\left(h_1^2 - 2c_1h_1 - 
    \frac{(d+1)(d-3)}{4}c_1^2
    + (d+1)^2c_2 
    \right)
    \left(
    -dh_1+\frac{d+1}{2}c_1\right) P_{d-2}
    \right]
\end{equation}
In order to obtain \eqref{eq:R100} and \eqref{eq:R101} one must check that:
\begin{equation}
 \left[
    R^{r,d+2}_{1,0,0}+ R^{r,d+2}_{1,0,1}h_1
    \right] = 
    \left[
    \left(
    -(d+2)h_1+\frac{d+3}{2}c_1\right) P_{d}
    \right] = \left[
    \left(
    -(d+2)h_1+\frac{d+3}{2}c_1\right)\left(
d^2 c_2-\frac{d^2 -1}{4} c_1^2 
\right) P_{d-2}
    \right] ,
\end{equation}
where the last equality is obtained applying \eqref{eq:PdQd}.
It is then sufficient to verify
\begin{equation}
   \left[\left(h_1^2 - 2c_1h_1 - 
    \frac{(d+1)(d-3)}{4}c_1^2
    + (d+1)^2c_2 
    \right)
    \left(
    -dh_1+\frac{d+1}{2}c_1\right) 
    \right] = \left[
    \left(
    -(d+2)h_1+\frac{d+3}{2}c_1\right)\left(
d^2 c_2-\frac{d^2 -1}{4} c_1^2 
\right) 
    \right],   
\end{equation}
which is easily done.

In order to prove \eqref{eq:R110}, \eqref{eq:R111},
one has
\begin{equation}
\left[
    R^{r,d}_{1,1,0}+ R^{r,d}_{1,1,1}h_1
    \right] =
    \left[h_1 P_{d-1}\left(\frac{d+1}{2}c_1-h_1\right)
    \right] = 
    \left[
    R^{r,d}_{1,0,0}h_1+ R^{r,d}_{1,0,1}h_1^2
    \right]=    \left[- R^{r,d}_{1,0,1}c_2+
    \left(R^{r,d}_{1,0,0}+c_1 R^{r,d}_{1,0,1}\right)h_1
    \right],
    \end{equation}
 from which the result follows by direct substitution.
\end{proof}

%%%%%%%%%%%%%
\section{Relations of type $\alpha_{i,0}$}
\label{sec:reloftype}

%%%%%%%%%%%%%%%

In this section we exhibit formulas for the relations obtained by pushing-forward the fundamental classes of the various components $Z_i$ of the envelope.  These classes exhibit the following interesting structure: for any fixed values of $d,r$, the relations $\alpha_{i,0}$ are obtained for all $i$ via the action of a differential operator on a single monomial function.

\begin{theorem} \label{thm:degszero} 

For any $d>0, r\geq 0, 0\leq i \leq d$, the relations $\alpha_{i,0}\in A^\ast(\Mcal_{0}(\Pro^r, d))$ are given by the formula:
\begin{equation}\label{eq:closedformzerorel}
\alpha_{i,0}=
     \displaystyle \sum_{j=0}^{i} \frac{(-1)^{j}}{i! (l_2-l_1)^i}\binom{i}{j}\left(\frac{P_{d}}
     {\prod_{k = j}^{d-i+j}\left(\frac{c_1}{2}+\left(k-\frac{d}{2}\right)(l_2-l_1) \right)}
     \right)^{r+1}.  
\end{equation}

\end{theorem}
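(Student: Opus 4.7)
The plan is to apply Atiyah-Bott localization to the fundamental class $1\in A^\ast_T(\Pro(W_i))$, lift it to the envelope component $\wt{Z}_i=\Pro(W_i)\times\Pro(W_{d-i}^{\oplus r+1})$ via the first projection $p_1$, push the resulting expression forward along $\pi_i$, and finally substitute $H=(d+1)c_1/2$ per Convention \ref{convsubstitute}.

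The first step is to identify the torus fixed points and compute the relevant Euler classes on the left factor. The maximal torus $T\subset\GL_2$ acts on the basis $\{x^{i-j}y^j\}_{j=0}^i$ of $W_i$ with weight $(i-j)l_1+jl_2$, so $\Pro(W_i)$ has $i+1$ fixed points $q_j:=[x^{i-j}y^j]$. A direct calculation gives the equivariant Euler class of the tangent space at $q_j$ as
$$e_j=\prod_{k\neq j}(k-j)(l_2-l_1)=(-1)^j\,j!(i-j)!(l_2-l_1)^i,$$
and hence $1/e_j=(-1)^j\binom{i}{j}/(i!(l_2-l_1)^i)$, which already matches the coefficient appearing in \eqref{eq:closedformzerorel}.

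Next, I would analyze the restriction $\iota_j$ of $\pi_i$ to the fiber of $p_1$ over $q_j$. This is the $T$-equivariant linear embedding $\Pro(W_{d-i}^{\oplus r+1})\hookrightarrow\Pro(W_d^{\oplus r+1})$ induced by multiplication by $x^{i-j}y^j$; its image is the linear subspace cut out by the $i(r+1)$ coordinate hyperplanes associated with monomials $x^{d-m}y^m$ for $m\notin[j,d-i+j]$, whose equivariant class is
$$\iota_{j,\ast}(1)=\left(\prod_{m\notin[j,d-i+j]}(H+(d-m)l_1+ml_2)\right)^{r+1}=\frac{P_{r,d}(H)}{\prod_{m=j}^{d-i+j}(H+(d-m)l_1+ml_2)^{r+1}}.$$
Combining the Atiyah-Bott identity $1=\sum_j(\iota_{q_j})_\ast(1)/e_j$ on $\Pro(W_i)$ with flat base change along $p_1$ and the projection formula for $\pi_i$ then yields
$$\alpha_{i,0}^{r,d}(H)=(\pi_i)_\ast(1)=\sum_{j=0}^i\frac{\iota_{j,\ast}(1)}{e_j},$$
and the theorem follows upon applying the easily-checked identity $(H+(d-m)l_1+ml_2)\big|_{H=(d+1)c_1/2}=c_1/2+(m-d/2)(l_2-l_1)$ together with $P_{r,d}((d+1)c_1/2)=P_d^{r+1}$.

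The main subtlety will be justifying that the localization identity, which a priori holds only after inverting the Euler classes $e_j$ (and thus $i!$ and powers of $l_2-l_1$), nonetheless descends to an honest integral relation in $A^\ast(\Mcal_{0}(\Pro^r,d))$. As indicated in the introduction, this is controlled by the torsion-freeness of the equivariant Chow ring of $\Pro(W_d^{\oplus r+1})$ together with the standing hypothesis $\Char(k)>d$. The resulting expression is visibly non-symmetric in $l_1,l_2$ even though it must represent a class in $A^\ast_{\GL_2}=\Z[c_1,c_2]$; this reflects the choice of weight basis used to enumerate the fixed points, and symmetrizing it into a polynomial in $c_1,c_2$ alone is left as a separate problem.
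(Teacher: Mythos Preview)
Your proposal is correct and follows essentially the same route as the paper: localize the fundamental class of $\Pro(W_i)$ at its $T$-fixed points $q_j$, compute the Euler classes $e_j=(-1)^j j!(i-j)!(l_2-l_1)^i$, identify $\pi_{i\ast}[q_j\times\Pro(W_{d-i}^{\oplus r+1})]$ as the class of a coordinate linear subspace of $\Pro(W_d^{\oplus r+1})$, and substitute $H=(d+1)c_1/2$. Your discussion of the torsion-freeness justification and of the apparent asymmetry in $l_1,l_2$ is also in line with the remarks made in the paper.
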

\begin{rmk}
We observe that in order to get from formula \eqref{eq:closedformzerorel} to the generating function $\mathcal{A}_{i,0}(d)$ from the Main Theorem, it is just a matter of noticing the geometric series arising when adding  \eqref{eq:closedformzerorel} for all values of $r$.
\end{rmk}

We obtain formula \eqref{eq:closedformzerorel} through an application of Atiyah-Bott localization, which we now recall.

\begin{theorem}[{\cite[Theorem 2]{EGL}}]\label{thm.loc}
	Define the $A^*_T$-module $ \Qcal:= ((A^*_T)^+)^{-1}A^*_T$, 
	where $(A^*_T)^+$ is the multiplicative system of homogeneous elements of $A^*_T$ of positive degree.
	
	Let $X$ be a smooth $T$-variety and consider the locus $F$ of fixed points for the action of $T$. Let $F=\cup F_j$ be the decomposition of $F$ into irreducible components. For every $\gamma$ in $A^*_T(X)\otimes\Qcal$, we have the identity
	\[ \gamma = \sum_{j} \frac{i_{F_j}^*(\gamma)}{c_{top}^T(N_{F_{j}/X})} \]
	where $i_{F_j}$ is the inclusion of $F_j$ in $X$ and $N_{F_j/X}$ is the normal bundle of $F_j$ in $X$.
\end{theorem}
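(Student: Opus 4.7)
The plan is to deduce the localization formula from three classical ingredients: a vanishing principle for equivariant Chow groups of $T$-varieties without fixed points, the excision long exact sequence, and the self-intersection formula paired with the invertibility of the equivariant Euler class of each normal bundle after localization at $(A^*_T)^+$.

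The heart of the argument is the vanishing lemma: if $T$ acts on a quasi-projective scheme $Y$ with no $T$-fixed points, then $A^*_T(Y)\otimes_{A^*_T}\mathcal{Q}=0$. Using the equivariant excision sequence and Noetherian induction on the orbit-type stratification, one reduces to the case of a single orbit type, where every stabilizer is conjugate to a proper subtorus $T'\subsetneq T$. One then selects a nontrivial character $\chi\in A^1_T$ whose restriction to $T'$ is trivial; such a $\chi$ annihilates $A^*_T(T/T')=A^*_{T'}$ (it lies in the kernel of the restriction map $A^*_T\to A^*_{T'}$), and by the local structure of $Y$ as a $T/T'$-bundle together with the projection formula, $\chi$ acts as zero on all of $A^*_T(Y)$. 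Since $\chi\in(A^*_T)^+$, the lemma follows. Tensoring the excision sequence $A^*_T(F)\to A^*_T(X)\to A^*_T(X\setminus F)\to 0$ with $\mathcal{Q}$ then gives surjectivity of $i_*:A^*_T(F)\otimes\mathcal{Q}\to A^*_T(X)\otimes\mathcal{Q}$, since the rightmost term vanishes.

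Second, for each component $F_j$ the equivariant Euler class $e_j:=c_{top}^T(N_{F_j/X})$ is invertible in $A^*_T(F_j)\otimes\mathcal{Q}$: since $F_j$ lies in the fixed locus, the normal bundle splits into $T$-eigenbundles with nonzero weights, so by the splitting principle $e_j$ is a product of factors of the form $\chi_k+\nu_k$ with $\chi_k$ a nonzero character of $T$ and $\nu_k$ a nilpotent class arising from positive-degree non-equivariant Chern classes. Hence $e_j$ is a unit (a product of nonzero $\chi_k$'s in $\mathcal{Q}$) plus a nilpotent correction, and is inverted by a finite geometric series.

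Given $\gamma\in A^*_T(X)\otimes\mathcal{Q}$, the first step writes $\gamma=\sum_j i_{F_j,*}(\beta_j)$. Pulling back by $i_{F_k}^*$ and using that distinct components are disjoint (so $i_{F_k}^*\circ i_{F_j,*}=0$ for $j\neq k$) together with the self-intersection formula $i_{F_k}^*i_{F_k,*}(\beta_k)=e_k\beta_k$ yields $i_{F_k}^*(\gamma)=e_k\beta_k$, whence $\beta_k=i_{F_k}^*(\gamma)/e_k$ and the stated identity follows by substitution (with the pushforwards $i_{F_j,*}$ implicit, as in the statement). The main obstacle is the vanishing lemma: controlling integral equivariant Chow groups for arbitrary $T$-schemes requires careful handling of the Borel approximation by finite-dimensional $T$-representations, as carried out by Edidin and Graham, whereas the remaining steps are essentially formal once the lemma is in place.
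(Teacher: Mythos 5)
The paper does not prove this statement: it is quoted verbatim from Edidin--Graham \cite[Theorem 2]{EGL}, so there is no internal proof to compare against. Your sketch is a correct outline of the standard argument from that source --- the vanishing of $A^*_T(Y)\otimes\Qcal$ for fixed-point-free $Y$ via the stabilizer stratification and a character killing $A^*_{T'}$, surjectivity of $i_*$ from excision, invertibility of the equivariant Euler classes (nonzero weights plus nilpotent corrections), and the self-intersection formula on the smooth, pairwise disjoint components $F_j$ --- and I see no gap beyond the expected deferral to Edidin--Graham for the foundational handling of the Borel construction.
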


\begin{proof}[Proof of Theorem \ref{thm:degszero}]
Referring to diagram
\eqref{eq:projsone} for notation,  we compute 
\begin{equation}
    \alpha_{i,0} = \pi_{1_{\ast}}([\widetilde{Z}_i]) = \pi_{1_{\ast}}(p_1^\ast([\Pro(W_i)]))
\end{equation}
by first localizing the fundamental class of $[\Pro(W_i)]$ and then pull-pushing the corresponding expression taking advantage of the fact that it is supported on a torus invariant subvariety. 

For $0\leq j\leq i$, denote by $q_j$ the  torus invariant point where only the $j$-th homogeneous coordinate is non-zero. From Theorem \ref{thm.loc} we have
\begin{equation}
  [\Pro(W_i)] = \sum_{j=0}^i \frac{[q_j]}{e(N_{q_j/\Pro(W_i)})},
\end{equation}
where the equivariant Euler class of the normal bundle to $q_j$ is the product of the $i$ tangent weights 
\begin{equation}
e(N_{q_j/\Pro(W_i)}) = 
\prod_{k\not=j}[((i-k) l_1+ kl_2)-((i-j) l_1+j l_2)] = (-1)^jj!(i-j)!(l_2-l_1)^i.
\end{equation}
Since both $p_1^\ast$ and $\pi_{1_{\ast}}$ are morphisms of $\Qcal$ modules, we have
\begin{equation}\label{eq:kicaku}
  \pi_{1_{\ast}}(p_1^\ast([\Pro(W_i)])) = \sum_{j=0}^i (-1)^j\frac{\pi_{1_{\ast}}[q_j\times \Pro(W_{d-i}^{\oplus r+1})]}{j!(i-j)!(l_2-l_1)^i}  
\end{equation}

It remains to compute the equivariant Chow class of $\pi_{1_{\ast}}[q_j\times \Pro(W_{d-i}^{\oplus r+1})]$. One may see that $\pi_1$ maps the subspace $q_j\times \Pro(W_{d-i}^{\oplus r+1})$ isomorphically onto the linear subspace of $\Pro(W_{d}^{\oplus r+1})$ parameterizing  $(r+1)$-tuples of polynomials which are divisible by $x^j$ and $y^{i-j}$. The class of this subspace corresponds to the product of the linear factors of the polynomial $P_{r,d}(H)$ corresponding to the homogeneous coordinates that vanish. By equivalently dividing $P_{r,d}(H)$ by the factors corresponding to the coordinates that do not vanish, one may write:

\begin{equation} \label{eq:pushfwdallway}
  \pi_{1_{\ast}}[q_j\times \Pro(W_{d-i}^{\oplus r+1})]   
 % \prod_{k=0}^{j-1} (H+(d-k)l_1+kl_2)^{r+1}  \prod_{k=d-i+j+1}^{d} (H+(d-k)l_1+kl_2)^{r+1}
  = \frac{P_{r,d}(H)}{\prod_{k=j}^{d-i+j} (H+(d-k)l_1+kl_2)^{r+1} } 
\end{equation}
Plugging \eqref{eq:pushfwdallway} into \eqref{eq:kicaku} and substituting $H= \frac{d+1}{2} c_1$ one readily obtains the expression in \eqref{eq:closedformzerorel}.

\end{proof}

\begin{corol} \label{cor:diffop}
For any fixed values of $r,d>0$, the relations $ \alpha_{i,0}$ for all values of $i$ are encoded in the generating function:
\begin{equation}\label{eq:genfunzerorel}
 \mathcal{A}_{0}(d) =   \sum_ {i =0}^d \alpha_{i,0}  =\left[ 
\left(
\text{e}^{t_1\partial_x+t_2\partial_y}
\left(
x^{\frac{c_1}{2(l_1-l_2)}+\frac{d}{2}}
y^{\frac{c_1}{2(l_2-l_1)}+\frac{d}{2}}
\right)
\right)^{\odot r+1}_{|x=y=1}
\right]_{\left | t_1 =t_2=1, \; deg \leq dr \right.},
\end{equation}
where the symbol $\odot$ refers to a Hadamard power for a two-variable exponential power series in variables $t_1/(l_1-l_2), t_2/(l_2-l_1)$,  defined as follows:
\begin{equation}\label{hadamard}
\left(\sum_{\mu,\nu} a_{\mu,\nu}  \frac{\left(\frac{t_1}{l_1-l_2}\right)^\mu}{\mu!}\frac{\left(\frac{t_2}{l_2-l_1}\right)^\nu}{\nu!}\right)^{\odot r+1}  :=  \sum_{\mu,\nu} a_{\mu,\nu}^{r+1}  \frac{\left(\frac{t_1}{l_1-l_2}\right)^\mu}{\mu!}\frac{\left(\frac{t_2}{l_2-l_1}\right)^\nu}{\nu!}.
\end{equation}
\end{corol}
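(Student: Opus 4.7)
The plan is to expand the right-hand side of \eqref{eq:genfunzerorel} explicitly and match term-by-term with the formula for $\alpha_{i,0}$ given by Theorem \ref{thm:degszero}. Setting $A = \frac{c_1}{2(l_1-l_2)}+\frac{d}{2}$ and $B = \frac{c_1}{2(l_2-l_1)}+\frac{d}{2}$, Taylor's theorem immediately gives
$$
e^{t_1\partial_x+t_2\partial_y}\bigl(x^A y^B\bigr) = (x+t_1)^A (y+t_2)^B,
$$
and the generalized binomial series, evaluated at $x=y=1$, yields $\sum_{\mu,\nu \geq 0} \binom{A}{\mu}\binom{B}{\nu} t_1^\mu t_2^\nu$.

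The next step is to rewrite this series in the form demanded by \eqref{hadamard}, i.e.\ in the variables $s_1 = t_1/(l_1-l_2)$ and $s_2 = t_2/(l_2-l_1)$. Then the coefficient $a_{\mu,\nu}$ to be raised to the $(r+1)$-st Hadamard power is
$$a_{\mu,\nu} = \mu! \, \nu! \binom{A}{\mu}\binom{B}{\nu}(l_1-l_2)^\mu (l_2-l_1)^\nu.$$
Setting $F_k := \frac{c_1}{2}+(k-\frac{d}{2})(l_2-l_1)$, the main algebraic identity to establish is
$$
a_{\mu,\nu} \;=\; \prod_{k=0}^{\mu-1} F_k \; \cdot \; \prod_{k=d-\nu+1}^{d} F_k.
$$
This reduces to the elementary equalities $A-k = F_k/(l_1-l_2)$ and $B-k = F_{d-k}/(l_2-l_1)$, both of which follow directly from the definitions. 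In the same vein, expanding $\frac{d+1}{2}c_1+(d-k)l_1+kl_2$ using $c_1 = -l_1-l_2$ shows $F_k = \frac{d+1}{2}c_1+(d-k)l_1+kl_2$, so that $P_d = \prod_{k=0}^d F_k$; this matches the numerator appearing in \eqref{eq:closedformzerorel}.

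Applying the Hadamard power and evaluating at $t_1=t_2=1$ (equivalently $s_1 = 1/(l_1-l_2)$, $s_2 = 1/(l_2-l_1)$) then produces
$$
\sum_{\mu,\nu \geq 0} \frac{1}{\mu!\,\nu!\,(l_1-l_2)^\mu (l_2-l_1)^\nu} \left(\prod_{k=0}^{\mu-1} F_k \prod_{k=d-\nu+1}^{d} F_k\right)^{r+1}.
$$
Re-indexing by $i = \mu+\nu$ and $j = \mu$, using $\binom{i}{j}/i! = 1/(\mu!\nu!)$ and absorbing the sign $(-1)^\mu$ via $(l_2-l_1)^\mu (-1)^\mu = (l_1-l_2)^\mu$, the inner sum over $\mu+\nu=i$ reproduces exactly the expression for $\alpha_{i,0}$ in \eqref{eq:closedformzerorel}, after pulling out $(l_2-l_1)^{-i}$ and matching the range of $k$ in the partial product to the denominator $\prod_{k=j}^{d-i+j}$ via $P_d/\bigl(\prod_{k=j}^{d-i+j} F_k\bigr) = \prod_{k=0}^{j-1}F_k \cdot \prod_{k=d-i+j+1}^{d} F_k$.

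Finally, a degree count ends the argument: each $F_k$ and each $(l_a-l_b)$ has Chow degree one, so the $(\mu,\nu)$-summand has degree $(r+1)(\mu+\nu)-(\mu+\nu) = ri$. The truncation at degree $\leq dr$ therefore precisely restricts the sum to $0 \leq i \leq d$, matching $\sum_{i=0}^d \alpha_{i,0}$. The principal obstacle is purely bookkeeping — tracking the Hadamard conventions, signs arising from $(l_2-l_1)^\mu$ versus $(l_1-l_2)^\mu$, and the symmetric splitting of the product $P_d = \prod_k F_k$ between the two ranges — but no further geometric input is required beyond Theorem \ref{thm:degszero}.
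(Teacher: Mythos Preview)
Your proof is correct and follows essentially the same approach as the paper: both expand the exponential differential operator applied to $x^{A}y^{B}$, specialize $x=y=1$, identify the resulting coefficients as the partial products $\prod F_k$, apply the Hadamard power, and then reindex by $i=\mu+\nu$, $j=\mu$ to recover \eqref{eq:closedformzerorel} together with the degree count. The only cosmetic difference is that you invoke the closed-form Taylor shift $(x+t_1)^A(y+t_2)^B$ and the generalized binomial expansion, whereas the paper differentiates term by term; the two computations are equivalent.
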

\begin{proof}

We    show that the degree $ri$ summand  of \eqref{eq:genfunzerorel} coincides with  \eqref{eq:closedformzerorel}.

The exponential differential operator is in commuting variables and hence it admits the natural Taylor expansion:
\begin{equation}\label{eq:expdiffop}
    \text{e}^{t_1\partial_x+t_2\partial_y} = \sum_{\mu, \nu}\frac{t_1^\mu}{\mu!}\frac{t_2^\nu}{\nu!}\partial_x^\mu\partial_y^\nu
\end{equation}
Applying the general summand in \eqref{eq:expdiffop} to the monomial 
$\left(
x^{\frac{c_1}{2(l_1-l_2)}+\frac{d}{2}}
y^{\frac{c_1}{2(l_2-l_1)}+\frac{d}{2}}
\right)$ and specializing $x=y=1$ one obtains:
\begin{equation} \label{eq:pakjdjhsd}
    \frac{t_1^\mu}{\mu!}\frac{t_2^\nu}{\nu!}\frac{(-1)^\mu}{(l_2-l_1)^{\mu+\nu}}
    \prod_{k=0}^{\mu-1} \left(\frac{c_1}{2}+\left(k-\frac{d}{2}\right)(l_2-l_1)\right)\prod_{m=0}^{\nu-1}\left(\frac{c_1}{2}+\left(\frac{d}{2}-m\right)(l_2-l_1)\right)
\end{equation}
Reindexing the second product in \eqref{eq:pakjdjhsd} by $m = d-k$ one may rewrite \eqref{eq:pakjdjhsd} to obtain
\begin{equation}
=\sum_{\mu,\nu}\label{eq:pakjd}
    \frac{t_1^\mu}{\mu!}\frac{t_2^\nu}{\nu!}\frac{(-1)^\mu}{(l_2-l_1)^{\mu+\nu}}
\frac{P_d}{
    \prod_{k=\mu}^{d-\nu+1} 
    \left(\frac{c_1}{2}+\left(k-\frac{d}{2}\right)(l_2-l_1)\right)}
\end{equation}
By the definition of Hadamard power in \eqref{hadamard}, taking the $(r+1)$-th power of the power series in \ref{eq:pakjd} has the effect of raising the last rational function to the power of $(r+1)$.
The Chow degree of the coefficient of $t_1^\mu t_2^\nu$ is $(r+1)(\mu+\nu) - (\mu+\nu) = r(\mu+\nu)$, from which follows that the degree $ri$ coefficient of the series is obtained by summing over all non-negative pairs of values of $\mu$ and $\nu$ adding to $i$. Setting $\mu = j, \nu = i-j$ one may immediately recognize formula \eqref{eq:closedformzerorel}.
\end{proof}

Formula \eqref{eq:closedformzerorel} allows us to also observe some structure of the relations $\alpha_{i,0}$ as the degree $d$ varies.
\begin{corol}\label{cor:pol}
For fixed values of $i$ and $r$, the coefficients of the classes $\alpha_{i,0}\in \Z[c_1,c_2]$ are polynomials in $d$ of degree $i(r+1)$.
\end{corol}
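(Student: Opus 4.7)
The plan is to trace the degree in $d$ directly through formula \eqref{eq:closedformzerorel}. Setting $L := l_2 - l_1$, one can write $P_d = \prod_{k=0}^{d}\bigl(\tfrac{c_1}{2}+(k-\tfrac{d}{2})L\bigr)$ and recognize the denominator of each summand as the subproduct indexed by $k\in\{j,\dots,d-i+j\}$. The ratio appearing inside the $(r+1)$-st power is therefore
$$Q_j := \prod_{k\in S_j}\left(\tfrac{c_1}{2}+\left(k-\tfrac{d}{2}\right)L\right),\qquad S_j:=\{0,\dots,j-1\}\cup\{d-i+j+1,\dots,d\},$$
a product of exactly $i$ linear forms in $(c_1,L)$ whose coefficients depend affine-linearly on $d$.

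Next I track how $d$ enters. Expanding $Q_j$, the coefficient of $c_1^aL^b$ (with $a+b=i$) is $2^{-a}e_b(k-d/2:k\in S_j)$, hence a polynomial in $d$ of degree at most $b$. The $(r+1)$-st power $Q_j^{r+1}$ is a convolution of $(r+1)$ copies, so the coefficient of $c_1^aL^b$ in $Q_j^{r+1}$ (with $a+b = i(r+1)$) is still a polynomial in $d$ of degree at most $b$, and this bound is preserved by the alternating sum $\sum_j(-1)^j\binom{i}{j}Q_j^{r+1}$.

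Finally I divide by $i!L^i$ and pass to $\Z[c_1,c_2]$. Since $\alpha_{i,0}$ is genuinely a class in $A^*(\Mcal_0(\Pro^r,d))\subset\Z[c_1,c_2]$ by Theorem \ref{thm:degszero}, the alternating sum is necessarily divisible by $L^i$. After division, the coefficient of $c_1^aL^{b'}$ in $\alpha_{i,0}$ (with $a+b'=ir$) is a polynomial in $d$ of degree at most $b'+i$. Since $\alpha_{i,0}$ is invariant under $l_1\leftrightarrow l_2$, only even powers $b'=2\beta$ contribute, and the substitution $L^2 = c_1^2-4c_2$ expresses the class in $\Z[c_1,c_2]$. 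The coefficient of $c_1^\alpha c_2^\gamma$ is a $\Z$-linear combination of coefficients of $c_1^aL^{2\beta}$ with $a+2\beta = \alpha+2\gamma$, so its degree in $d$ is at most $2\beta+i \leq (\alpha+2\gamma)+i \leq ir+i = i(r+1)$, using that the total Chow degree of $\alpha_{i,0}$ equals $ir$.

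The only delicate step I anticipate as the main obstacle is the divisibility by $L^i$ of the alternating sum; this follows for free from the fact that $\alpha_{i,0}$ is an honest polynomial class, but can also be verified directly by expanding in powers of $L$ and invoking the vanishing of the $i$-th finite difference $\sum_j(-1)^j\binom{i}{j}p(j)$ for any polynomial $p$ in $j$ of degree less than $i$.
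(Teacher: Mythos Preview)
The paper states Corollary~\ref{cor:pol} without proof, as an immediate consequence of formula~\eqref{eq:closedformzerorel}, so your argument is essentially filling in what the paper leaves implicit. Your approach is correct and is exactly the natural one: once the ratio in~\eqref{eq:closedformzerorel} is rewritten as the product
\[
Q_j=\prod_{k\in S_j}\Bigl(\tfrac{c_1}{2}+\bigl(k-\tfrac{d}{2}\bigr)L\Bigr),
\qquad S_j=\{0,\dots,j-1\}\cup\{d-i+j+1,\dots,d\},
\]
each of the $i$ factors is affine in $d$ (the shifts $k-\tfrac{d}{2}$ for $k\in S_j$ form a fixed-size family of affine functions of $d$), so $Q_j^{\,r+1}$ has $d$-degree at most $i(r+1)$. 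All subsequent operations---the alternating sum with binomial weights, the division by $i!\,L^i$, and the substitution $L^2=c_1^2-4c_2$---are independent of $d$, so the bound passes through unchanged. Your more refined tracking (that the coefficient of $L^b$ in $Q_j^{\,r+1}$ has $d$-degree at most $b$) is correct but not strictly necessary for the stated conclusion; the coarser observation already suffices.

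The one point you do not address, and which the paper's phrasing ``of degree $i(r+1)$'' appears to assert, is that the bound is \emph{attained}. This is not entirely automatic: for example, the leading $d^{\,i(r+1)}$-contribution to the top coefficient (that of $L^{i(r+1)}$ before division) is $(d/2)^{i(r+1)}\sum_j(-1)^j\binom{i}{j}(-1)^{j(r+1)}$, which vanishes when $r$ is odd, so sharpness has to come from another monomial. The paper does not justify this either, so your proof matches the level of detail the paper provides.
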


%%%%%%%
\section{Examples}
\label{sec:examples}

%%%%%
\subsection{The Case $d=1$.} Let us consider linear maps. The space $\Mcal_{0}(\Pro^r,1)$ is the Grassmannian of lines in $\Pro^r$. The integral Chow ring of Grassmannians is well known (see for example \cite[Theorem 5.26]{3264}), therefore, in this case, the result is not original but it allows us to show how to apply the formulas in the simplest case. Thanks to Theorem \ref{ideal.of.relations}, the integral intersection ring of $\Mcal_{0}(\Pro^r,1)$ has the form
\[
\mathbb Z [c_1,c_2]/I
\]
where $I$ is the ideal generated by the classes $\alpha_{1,0}$ and $\alpha_{1,1}$. By replacing $d=1$ in equations (\ref{eq:gf0}) and (\ref{eq:gf1}), we get that the classes $\alpha_{1,0}$ and $\alpha_{1,1}$ are the terms of total degree $r$ and $r+1$ in the power series expansion:
\[
\frac{1}{1 - c_1 + c_2} = 1 - (- c_1 + c_2) + (- c_1+c_2)^2 - \dots
\]
The generating functions $\mathcal A_{1,0}(c_1,c_2,1)$ and $\mathcal A_{1,1}(c_1,c_2,1)$ from equations (\ref{eq:gf0}) and (\ref{eq:gf1}) are the same up to the constant term. The above result is consistent with \cite[Theorem 5.26]{3264} up to replacing our $c_1$ with $-c_1$.

\subsection{The Case $r=2$ and $d=3$.} \label{sec:planecubics} We  explicitly determine the case of rational cubics in $\Pro^2$. By Theorem \ref{ideal.of.relations}, the integral intersection ring of $\Mcal_{0}(\Pro^2,3)$ has the form
$
\mathbb Z [c_1,c_2]/I,
$
where $I$ is the ideal generated by the classes $\alpha_{1,0}$, $\alpha_{1,1}$, $\alpha_{2,0}$, and $\alpha_{3,0}$.

The generating functions for $\alpha_{1,0}$ and $\alpha_{1,1}$, when $d=3$, are
\begin{eqnarray*}
\mathcal A_{1,0}(3) &=& \frac{3}{(1+c_1)(1-2c_1)+9c_2}=\frac{3}{1-c_1-2c_1^2 +9c_2}=3 \left( 1 + (c_1 + 2c_1^2 - 9c_2) + (c_1 + 2c_1^2 - 9c_2)^2 + \dots \right);\\
\mathcal A_{1,1}(3) &=& \frac{2c_1+2c_1^2 - 9c_2}{1-c_1-2c_1^2 +9c_2}=(2c_1+2c_1^2 - 9c_2) \left( 1 + (c_1 + 2c_1^2 - 9c_2) + (c_1 + 2c_1^2 - 9c_2)^2 + \dots \right). \\
\end{eqnarray*}

When $r=2$, the class $\alpha_{1,0}$ is the term of degree 2 of $\mathcal A_{1,0}(c_1,c_2,3)$. More precisely:
\[
\alpha_{1,0}=3(2c_1^2 - 9c_2 + c_1^2) = 9c_1^2 - 27 c_2.
\]
On the other hand, the class $\alpha_{1,1}$ is the term of degree 3 of $\mathcal A_{1,1}(c_1,c_2,3)$, that is to say:
\[
\alpha_{1,1}=2c_1(3c_1^2 - 9c_2)+(2c_1^2 - 9c_2)(c_1) =8 c_1^3 - 27 c_1 c_2.
\]

We apply formula (\ref{eq:closedformzerorel}) to determine $\alpha_{2,0}$ and $\alpha_{3,0}$. We  use the following script on Maple that  works for any values of $d$ and $r$:
\begin{center}
\includegraphics[scale=0.4]{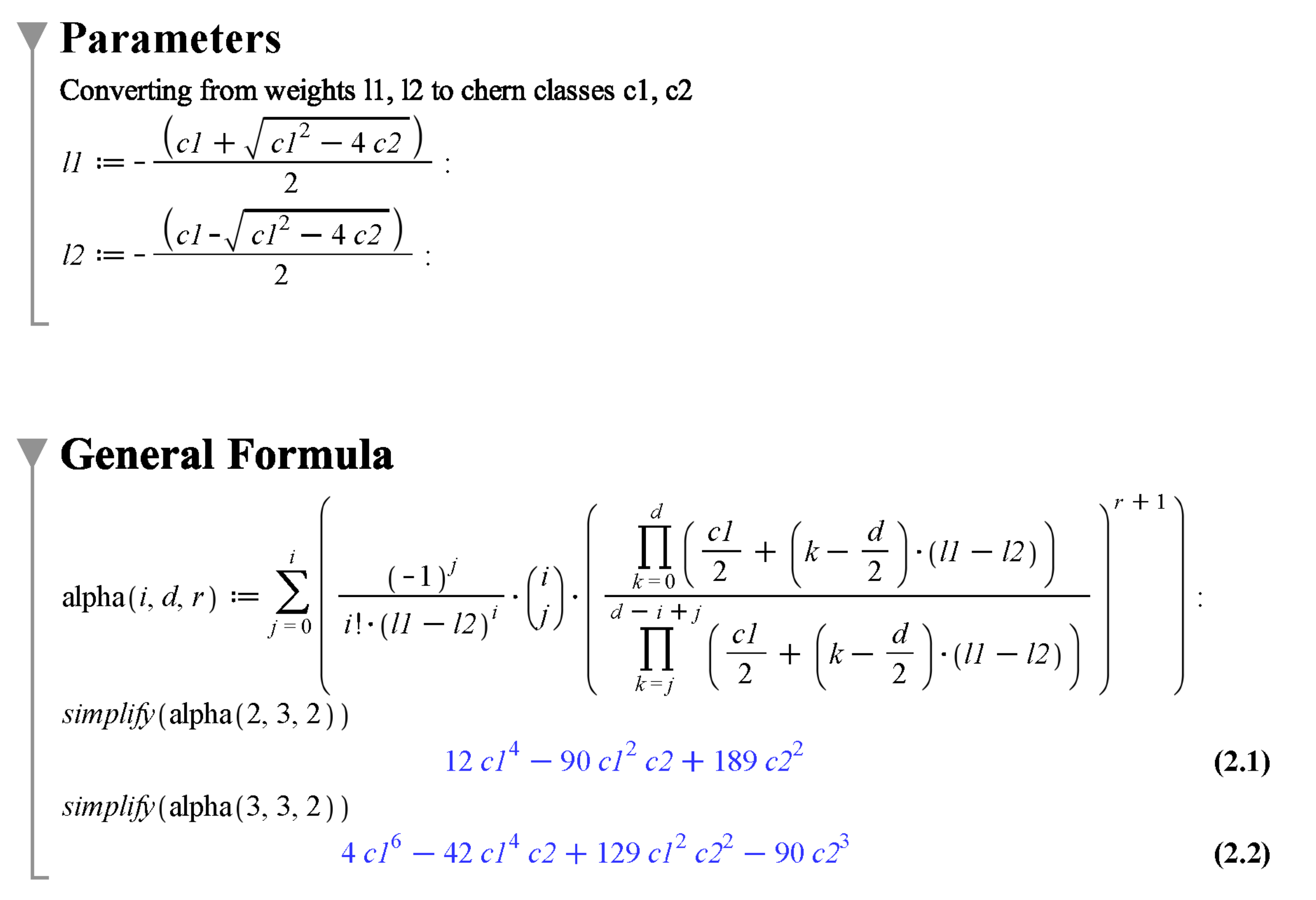}
\end{center}

Hence we have:
\begin{eqnarray*}
\alpha_{2,0}&=&12c_1^4 - 90c_1^2c_2 + 189c_2^2;\\
\alpha_{3,0}&=& 4c_1^6 - 42c_1^4c_2 + 129c_1^2c_2^2 - 90c_2^3.
\end{eqnarray*}

One can simplify the generators to write the final result in a more compact form:

\begin{tabular}{l|l}
Classes & Reduction\\
\hline
& \\
$\alpha_{1,0}=9 c_1^2 - 27c_2$ & \fbox{$9 c_1^2 - 27c_2$}\\
& \\
$\alpha_{1,1}=8c_1^3 - 27c_1 c_2$ & $\text{\fbox{$c_1^3$}}=c_1\cdot \alpha_{1,0}-\alpha_{1,1}$\\
& moreover, $27c_1 c_2$ is generated by $c_1^3$ and $\alpha_{1,1}$\\
& \\
$\alpha_{2,0}=12c_1^4 - 90c_1^2 c_2 + 189c_2^2$ & $\alpha_{2,0} = (4c_1^2 - 7c_2)\cdot \alpha_{1,0} - 3c_1 \cdot \alpha_{1,1}$ \\ 
& \\
$\alpha_{3,0}=4c_1^6 - 42c_1^4c_2+129c_1^2c_2^2 - 90c_2^3$ &

{$\fbox{$6c_2^2c_1^2+9c_2^3$}= (4c1^4 - 10c1^2c2 + 3c2^2)\alpha_{1,0} - (4c1^3 - 6c1c2 )\alpha_{1,1} -\alpha_{3,0}$}
\end{tabular}

\bigskip

The class $\alpha_{2,0}$ is redundant since it belongs to the ideal generated by $\alpha_{1,0}$ and $\alpha_{1,1}$.
 On the other hand, the class $\alpha_{3,0}$ does not belong to the ideal generated by $\alpha_{1,0}$ and $\alpha_{1,1}$. One may easily check this as follows: taking a further quotient by the ideal generated by $c_1$ and $27$, we get $\alpha_{1,0}\equiv \alpha_{1,1}\equiv 0$ but $\alpha_{3,0}\equiv 9c_2^3 \not\equiv 0$.
One can also see that 
$$3(6c_2^2c_1^2+9c_2^3) = -c_2^2(9c_1^2 -27c_2) +c_1c_2(27c_1c_2),$$
showing in particular that the class $3\alpha_{3,0}$ is in the ideal generated by $\alpha_{1,0}$ and $\alpha_{1,1}$. In conclusion, we have shown the following result.
 
\begin{theorem}\label{thm.r2.d3}
We have the isomorphism:
\[
A^* \left( \Mcal_{0}\left( \Pro^2, 3 \right) \right) \cong \frac{\Z[c_1,c_2]}{(9c_1^2 - 27c_2,\; c_1^3,\; 6c_2^2c_1^2+9c_2^3)}.
\]
\end{theorem}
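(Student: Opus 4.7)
The plan is to specialise the general machinery developed earlier in the paper to $(r,d) = (2,3)$. By Theorem \ref{thmheresthepres},
\[
A^\ast\bigl(\Mcal_{0}(\Pro^2,3)\bigr) \cong \Z[c_1, c_2]/I,
\]
and Theorem \ref{ideal.of.relations} tells us the generating set of $I$ collapses to the four classes $\alpha_{1,0}$, $\alpha_{1,1}$ (from the first envelope) together with $\alpha_{2,0}$ and $\alpha_{3,0}$ (the fundamental-class pushforwards from the envelopes indexed by the prime powers $2$ and $3$ at most $d=3$). So the theorem becomes a purely computational identity of ideals in $\Z[c_1, c_2]$, to be verified once the four generators are written out.

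The second step is to compute each generator. Specialising Theorem \ref{thm:genfunctionsforfirstenveloperelations} to $d=3$ and extracting the degree-$r=2$ and degree-$r+1=3$ parts of the generating functions $\mathcal{A}_{1,0}(c_1,c_2,3)$ and $\mathcal{A}_{1,1}(c_1,c_2,3)$ produces $\alpha_{1,0} = 9c_1^2 - 27c_2$ and $\alpha_{1,1} = 8c_1^3 - 27c_1 c_2$. Evaluating \eqref{eq:closedformzerorel} at $r=2$, $d=3$, and $i=2,3$ produces $\alpha_{2,0} = 12c_1^4 - 90c_1^2 c_2 + 189c_2^2$ and $\alpha_{3,0} = 4c_1^6 - 42c_1^4 c_2 + 129c_1^2 c_2^2 - 90c_2^3$; this is a symbolic manipulation best handled by a computer algebra system.

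The final step is the simplification of the ideal $(\alpha_{1,0}, \alpha_{1,1}, \alpha_{2,0}, \alpha_{3,0})$ to the presentation claimed in the theorem, via three explicit identities in $\Z[c_1, c_2]$:
\[
c_1^3 = c_1\alpha_{1,0} - \alpha_{1,1}, \qquad \alpha_{2,0} = (4c_1^2 - 7c_2)\alpha_{1,0} - 3c_1 \alpha_{1,1},
\]
\[
6c_1^2 c_2^2 + 9c_2^3 = (4c_1^4 - 10c_1^2 c_2 + 3c_2^2)\alpha_{1,0} - (4c_1^3 - 6c_1 c_2)\alpha_{1,1} - \alpha_{3,0}.
\]
The first lets us trade $\alpha_{1,1}$ for $c_1^3$; the second makes $\alpha_{2,0}$ redundant; the third replaces $\alpha_{3,0}$ with $6c_1^2 c_2^2 + 9c_2^3$ modulo $(\alpha_{1,0},\alpha_{1,1})$. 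I expect the main obstacle to be the third identity: one must \emph{guess} the correct residue $6c_1^2 c_2^2 + 9c_2^3$ to aim for, and confirm that this residue is not itself absorbed by $(\alpha_{1,0}, \alpha_{1,1})$. The latter point is settled by reducing modulo $(c_1, 27)$, where $\alpha_{1,0}$ and $\alpha_{1,1}$ both vanish but $6c_1^2 c_2^2 + 9c_2^3 \equiv 9c_2^3 \not\equiv 0$, confirming that the third generator is genuinely needed and the presentation cannot be compressed further using $\alpha_{1,0}$ and $\alpha_{1,1}$ alone.
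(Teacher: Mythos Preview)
Your proposal is correct and follows essentially the same approach as the paper: reduce to the four generators via Theorems \ref{thmheresthepres} and \ref{ideal.of.relations}, compute them from the generating functions and formula \eqref{eq:closedformzerorel}, and then simplify via exactly the three identities you list, with the non-redundancy of the third generator checked by the same reduction modulo $(c_1,27)$. The only extra remark in the paper that you omit is the observation that $3(6c_1^2c_2^2+9c_2^3)$ already lies in $(\alpha_{1,0},\alpha_{1,1})$, but this is not needed for the theorem itself.
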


As seen in Theorem \ref{thm.r2.d3}, the set of generators described in Theorem \ref{ideal.of.relations} is not necessarily minimal. We propose the following conjecture.

\begin{conj}\label{conj:gens}
Let $r$ be a positive integer and $d$ a positive odd number. Then
\[
A^* \left( \Mcal_{0}\left( \Pro^r, d \right) \right) \cong \frac{\Z[c_1,c_2]}{(\alpha_{1,0}, \alpha_{1,1}, \{ \alpha_{p,0} | \; \text{$p$ is a prime that divides $d$} \}) }.
\]
Further, all the relations listed are necessary.
\end{conj}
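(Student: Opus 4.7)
The conjecture splits into sufficiency (the listed relations generate the ideal of relations) and necessity (no listed relation is redundant).

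For sufficiency, Theorem \ref{ideal.of.relations} reduces the task to: (i) $\alpha_{q, 0} \in (\alpha_{1, 0}, \alpha_{1, 1})$ for primes $q \nmid d$, and (ii) $\alpha_{p^m, 0} \in (\alpha_{1, 0}, \alpha_{1, 1}, \alpha_{p, 0})$ for $m \geq 2$. For (i), I would combine the closed form in Theorem \ref{thm:degszero} with the polynomiality-in-$d$ statement of Corollary \ref{cor:pol}: for fixed $r$ the coefficients of $\alpha_{q, 0}$ are polynomials in $d$, and using the explicit $d$-dependence of $\alpha_{1, 0}, \alpha_{1, 1}$ from \eqref{eq:gf0}--\eqref{eq:gf1}, the invertibility of $d$ modulo $q$ should realize $\alpha_{q, 0}$ as a $\Z[c_1, c_2]$-linear combination of $\alpha_{1, 0}$ and $\alpha_{1, 1}$. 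For (ii), I would use the Hadamard-power description of Corollary \ref{cor:diffop}, together with the factorization of $\pi_{p^m}$ through the multiplication $W_{p^{m-1}} \otimes W_p \to W_{p^m}$ present in diagram \eqref{eq:projstwo}, to set up an induction on $m$ in which Lucas's theorem supplies the numerical identity that forces leading terms to match.

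For necessity, I would generalize the quotient argument at the end of Section \ref{sec:planecubics}. For each prime $p \mid d$, the plan is to construct an ideal $J_p \subset \Z[c_1, c_2]$ containing $c_1$ and an appropriate power of $p$ such that $\alpha_{1, 0}, \alpha_{1, 1}$ and $\alpha_{q, 0}$ for every prime $q \mid d$ with $q \neq p$ all vanish in $\Z[c_1, c_2] / J_p$, while $\alpha_{p, 0}$ does not. Direct computation from \eqref{eq:gf0}--\eqref{eq:gf1} yields, for $r$ even,
\[
\alpha_{1, 0}^{r, d} \equiv (-1)^{r/2} d^{r+1} c_2^{r/2} \pmod{c_1}, \qquad \alpha_{1, 1}^{r, d} \equiv 0 \pmod{c_1},
\]
and symmetric statements for $r$ odd. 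The decisive step is to extract from Theorem \ref{thm:degszero} the mod-$c_1$ reduction of each $\alpha_{q, 0}$, compute its $p$-adic valuation, and verify that $J_p = (c_1, p^N)$ for suitable $N$ separates $\alpha_{p, 0}$ from all other listed generators. The necessity of $\alpha_{1, 0}$ and $\alpha_{1, 1}$ themselves would follow from degree considerations: they are the unique relations in degrees $r$ and $r + 1$, respectively.

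The principal obstacle is (ii), the redundancy of $\alpha_{p^m, 0}$ for $m \geq 2$: the envelope-pushforward techniques of Section \ref{sec:eliminredrel} yield only $p \cdot \alpha_{p^m, 0}$ in the lower-envelope ideal, not the class itself, so achieving the full reduction requires a genuine arithmetic identity that I expect will emerge most cleanly from the symmetric-function structure underlying \eqref{eq:closedformzerorel}.
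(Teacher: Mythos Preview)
The statement you are addressing is labeled \textbf{Conjecture} \ref{conj:gens} in the paper, and the paper does \emph{not} contain a proof. The only evidence offered is computational: a {\tt Macaulay2} verification for $r\leq 9$, $d\leq 49$ (and the weaker generation-only statement for $r\leq 5$, $d\leq 99$). So there is no paper proof to compare your proposal against.

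That said, your proposal is not a proof either; it is an outline with explicitly acknowledged gaps. Your own final paragraph names the principal one: for step (ii), the envelope machinery of Section~\ref{sec:eliminredrel} only produces $p\cdot\alpha_{p^m,0}$ in the ideal generated by lower envelopes, and you offer no concrete mechanism to close the gap beyond the hope that ``a genuine arithmetic identity \ldots\ will emerge.'' That is precisely the content of the conjecture on the sufficiency side, so you have not advanced beyond restating it. Step (i) has a similar issue: the phrase ``the invertibility of $d$ modulo $q$ should realize $\alpha_{q,0}$ as a $\Z[c_1,c_2]$-linear combination'' is not an argument. Polynomiality of coefficients in $d$ (Corollary~\ref{cor:pol}) says nothing about divisibility of the resulting class by $\alpha_{1,0}$ and $\alpha_{1,1}$ over $\Z$; you would need an explicit identity in $\Z[c_1,c_2]$, and none is supplied.

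On the necessity side your plan is more reasonable --- the mod-$(c_1,p^N)$ reduction generalizing the $d=3$, $r=2$ example is the natural thing to try --- but again the ``decisive step'' (computing the $p$-adic valuation of the mod-$c_1$ reduction of each $\alpha_{q,0}$ from \eqref{eq:closedformzerorel} and showing it separates $\alpha_{p,0}$) is asserted rather than carried out, and the formula \eqref{eq:closedformzerorel} is written in Chern roots, so extracting integral $p$-adic information after symmetrization is itself nontrivial. In short: your outline identifies the right subproblems, but none of them is actually resolved, and the paper regards the statement as open.
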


Seth Ireland  programmed a {\tt Macaulay2} code that provided extensive verification for this conjecture \cite{Seth}.  At this point we know the conjecture to be true for $r \leq 9$, $d\leq 49 $. A weaker version of the conjecture, asserting generation but not minimality, has been verified for $r\leq 5 $, $d\leq 99$.

%%%%%%

\bibliographystyle{alpha} 
\bibliography{Kontsevich} 
\end{document}